\theoremstyle{definition}
\begin{document}
	\title[\hfilneg Multiple Solutions for Nonlinear Generalized-Kirchhoff Type Potential Systems \hfil]{Multiple Solutions for Nonlinear Generalized-Kirchhoff type potential Systems in Unbounded Domains}

	\author[Nabil Chems Eddine and Anass Ouannasser]
	{Nabil Chems Eddine and Anass Ouannasser} 
	
	\email{nab.chemseddine@gmail.com}
	\email{anass.ouannasser@um5r.ac.ma}
	
	\address{Center of Mathematical Research and Applications of Rabat (CeReMAR), Laboratory of Mathematical Analysis and Applications, Faculty of Sciences, Mohammed V University, P.O. Box 1014, Rabat, Morocco.}

	\keywords{Multiple solutions, variable exponent spaces, Kirchhoff-type problems, $p$-Laplacian, $p(x)$-Laplacian, generalized Capillary operator, critical points theory.}
	\subjclass[2010]{ 35J60, 47J30, 58E05}
	
	\begin{abstract}
		In this paper, we consider a class of quasilinear stationary Kirchhoff type potential systems in unbounded domains, which involves a general variable exponent elliptic operator. Under some suitable conditions on the nonlinearities, we establish existence of at least three weak solutions for the problem. The proof of our main result uses variational methods and the critical theorem of Bonanno and Marano.
	\end{abstract}

	\maketitle
	\numberwithin{equation}{section}
	\newtheorem{theorem}{Theorem}[section]
	\newtheorem{lemma}[theorem]{Lemma}
	\newtheorem{proposition}[theorem]{Proposition}
	\newtheorem{definition}[theorem]{Definition}
	\newtheorem{remark}[theorem]{Remark}
	\allowdisplaybreaks
	
	\section{Introduction}
	
	The aim of this paper is to show the existence of at least three weak solutions for the following class of nonlocal quasilinear elliptic systems in $\mathbb{R}^N$
	
	\begin{eqnarray}
		\label{s1.1}
		\begin{cases}
			-M_i\Big(\mathcal{B}_i(u_i)\Big)
			\textrm{div}\,\Big(  \mathcal{A}_{1i}(\nabla u_i)+ w_i(x)\mathcal{A}_{2i}(u_i)\Big)=\lambda F_{u_i}(x,u) & \text{in }\mathbb{R}^N, \\
			\qquad u_i \in W_{w_i}^{1,p_i(x)}(\mathbb{R}^N)\cap W_{w_i}^{1,\gamma_i(x)}(\mathbb{R}^N) ;
		\end{cases}
	\end{eqnarray}
	for $1\leq i \leq n$  ($n\in \mathbb{N} $), where $N\geq 2$,  $\lambda$ is a positive real parameter and the functions $w_i \in L^{\infty}(\mathbb{R}^N)$ such that $\displaystyle w_i := ess \inf_{x\in \mathbb{R}^N} w_i(x) > 0$. $M_i$ are bounded continuous   functions, $F$ belongs to $C^1(\mathbb{R}^{N}\times \mathbb{R}^{n})$ and satisfies adequate growth assumptions and   $F_{u_i}$ denotes the partial derivative of $F$ with respect to $u_i$ for all $1\leq i \leq n$. The functions $p_i(x), q_i(x)$ and $\gamma_i(x)$ are  continuous real-valued functions such that
	\begin{gather}
		1 <p_i^{-}\leq p_i(x) \leq p_i^{+}< q_i^{-}\leq q_i(x) \leq q_i^{+}
		< N,
	\end{gather}
	and
	$$\gamma_i(x) =(1-\mathcal{H}(k^3_{i})) p_i(x)+ \mathcal{H}(k^3_{i})q_i(x),$$
	
	for all $x\in \mathbb{R}^N$ with $k^3_{i}$ is given in $(\textbf{\textit{H}}_2)$, where $\displaystyle p_i^-:= \inf_{x\in \mathbb{R}^N}p_i(x)$,  $\displaystyle p_i^{+}:= \sup_{x\in \mathbb{R}^N}p_i(x)$, and analogously to $q_i^-, q_i^+, \gamma_i^-$ and $\gamma_i^+$, with $\displaystyle \mathcal{H}:\mathbb{R}_{0}^{+}\to \left\lbrace 0,1\right\rbrace  $ is given by
	\[
	\mathcal{H}(k_i)=\begin{cases}
		1 & \text{ if }        	k_i \geq 0,\\
		0 & \text{ if }            k_i<0.
	\end{cases}
	\]
	
	The operator $\mathcal{A}_{ji} : X_i\to \mathbb{R}^n$ with $j=1$ or $2$, and the operator  $ \mathcal{B}_i : X_i\to \mathbb{R}$, are respectively defined by  
	\begin{equation}\label{AB}
		\mathcal{A}_{ji}(u_i)= a_{ji}(| u_i|^{p_i(x)}) | u_i|^{p_i(x)-2} u_i, \text{ and }\mathcal{B}_i(u_i)= \displaystyle\int_{\mathbb{R}^N }\dfrac{1}{p_i(x)}\Big(A_{1i}(|\nabla u_i|^{p_i(x)})+ w_i(x)A_{2i}(|u_i|^{p_i(x)})\Big)dx,
	\end{equation}
	where $X_i$ is the Banach space $X_i:= W_{w_i}^{1,p_i(x)}(\mathbb{R}^N)\cap W_{w_i}^{1,\gamma_i(x)}(\mathbb{R}^N)$, $A_{ji}(.)$ is the function $A_{ji}(t)=\displaystyle\int_{0}^{t}a_{ji}(k) dk$, and the function $a_i(.)$ is described in the hypothesis $(\textbf{\textit{H}}_1)$. \\
	
	In this article, we consider the function $ a_{ji}: \mathbb{R}^+ \to \mathbb{R}^+$  satisfying the following hypotheses for all $1\leq i \leq n$ and $j\in \{1,2\}$:
	
	\begin{itemize}
		\item[$(\textbf{\textit{H}}_1)$] The function $a_{ji}(.)$ is of   class $C^1$.
		
		\item[$(\textbf{\textit{H}}_2)$] There exist positive constants $k_{ji}^0, k_{ji}^1, k_{ji}^2$ and $k_{i}^3$ for all $1\leq i\leq n$, such that
		
		$$k_{ji}^0 + \mathcal{H}(k_{i}^3)k_{ji}^2 \tau^{\frac{q_i(x)-p_i(x)}{p_i(x)}} \leq a_{ji}(\tau ) \leq k_{ji}^1 + k_{i}^3\tau^{\frac{q_i(x)-p_i(x)}{p_i(x)}},
		$$
		for all $\tau \geq 0$ and for almost every $x\in \mathbb{R}^N.$
		
		\item[$(\textbf{\textit{H}}_3)$] There exist $c_{ji}>0$ such that
		
		$$ \min \left\lbrace a_{ji}(\tau^{p_i(x)})\tau^{p_i(x)-2}, a_{ji}(\tau^{p_i(x)})\tau^{p_i(x)-2}
		+\tau \frac{\partial(a_{ji}(\tau^{p_i(x)})\tau^{p_i(x)-2})}{\partial \tau } \right\rbrace \geq c_{ji}\tau^{p_i(x)-2},
		$$
		for almost every $x\in \mathbb{R}^N$ and for all $\tau>0$.
		
		\item[$(\textbf{\textit{H}}_4)$]
		There exists positive constants $\beta_{ji}$ for all $i\in \left\lbrace 1,2,\dots,n\right\rbrace $ such that
		
		$$ A_{ji}(\tau )\geq \frac{1}{\beta_{ji}}a_{ji}(\tau)\tau,$$
		for all $\tau \geq 0$.
		
		\item[$(\textbf{\textit{M}})$] $M_i: \mathbb{R}^+ \longrightarrow \mathbb{R}$ are continuous and increasing functions such that $0 < m_0=\displaystyle\min_{1 \leq i \leq n} M_i(0)  \leq M_i(t) \leq m_1=\max_{1 \leq i \leq n}M_i(t)$, for all $t \geq 0$, $i\in \left\lbrace 1,2,\dots,n\right\rbrace $. \\
	\end{itemize}

     Recently, a great attention has been focused on the study of this type of problems. They appear in mathematical models in different branches in science as electrorheological fluids \cite{Ru1,Ru2}, elastic mechanics \cite{Zhik1}, stationary thermorheological viscous flows of non-Newtonian fluids \cite{Raj}, image processing \cite{Chen1}, and mathematical description of the processes filtration of barotropic gas through a porous medium \cite{Anto2}. \\
     	
     The system (\ref{s1.1}) is related (in the case of a single equation)  to a model firstly proposed by Kirchhoff in 1883 as the stationary version of the Kirchhoff equation
     
     \begin{equation}
     	\label{e1.1}
     	\rho\frac{\partial^2u}{\partial t^2}-\left( \dfrac{\rho_0}{h}+\dfrac{E}{2L}\int_{0}^{L}\left\vert\dfrac{\partial u(x)}{\partial x}\right\vert^2dx \right)\dfrac{\partial^2u}{\partial x^2}=0,
     \end{equation}
     where $\rho$, $\rho_0$, $E$ and $L$ are constants. This equation   extends the classical D'Alembert's wave equation by considering the effects of the changes in the length of the strings during the vibrations. A distinguishing feature of equation (\ref{e1.1}) is that the equation contains a nonlocal coefficient $\dfrac{\rho_0}{h}+\dfrac{E}{2L}{\displaystyle\int_{0}^{L}\left\vert\dfrac{\partial u}{\partial x}\right\vert^2dx}$ which depends on the average $\dfrac{1}{L}{\displaystyle\int_{0}^{L}\left\vert\dfrac{\partial u}{\partial x}\right\vert^2dx}$, and hence the equation is no longer a pointwise equation. The parameters in equation (\ref{e1.1}) have the following meanings: $E$ is the Young modulus of the material, $\rho$ is the mass density, $L$ is the length of the string, $h$ is the area of cross-section, and $\rho_0$ is the initial tension (see \cite{Kir}). \\
	
	Now, in order to illustrate the degree of generality of the kind of problems studied here, with adequate hypotheses on the functions $a_{ji}$, in the following we present more some examples of problems which are also interesting from the mathematical point of view and have a wide range of applications in physics and related sciences. \\
	
	\textbf{Example I.} Considering $a_{ji}\equiv 1$, we have that $a_{ji}$ satisfies the  $(\textbf{\textit{H}}_1),(\textbf{\textit{H}}_2)$ and $(\textbf{\textit{H}}_3)$ with $k^0_{ji}=k^1_{ji}=1$  and $k^2_{ji}>0$ and $k^3_{i}=0$ for all $i \in \{1,2,...,N\}$ and for $j=1$ or $2$. In this case we are studying problem:
	
	\begin{eqnarray}
		\label{ex1.1}
		\begin{cases}
			-M_i\Big(\mathcal{B}_i(u_i)\Big)\Big(\Delta_{p_i(x)}u_i -w_i(x)| u_i|^{p_i(x)-2} u_i\Big)
			= \lambda F_{u_i}(x,u)  & \text{ in } \mathbb{R}^N, \\
			\quad	u_i \in W_{w_i}^{1,p_i(x)}(\mathbb{R}^N)\\
			
		\end{cases}
	\end{eqnarray}
	
	where $\mathcal{B}_i(u_i)=\displaystyle\int_{\mathbb{R}^N }\dfrac{1}{p_i(x)}\left(|\nabla u_i|^{p_i(x)}+ w_i(x)|u_i|^{p_i(x)} \right)dx$. The operator $ \Delta_{p_i(x)}u_i:=\textrm{div}\,(|\nabla u_i|^{p_i(x)-2}\nabla u_i)$ is so-called $p_i(x)$-Laplacian, which coincides with the usual $p_i$-Laplacian when $p_i(x)=p_i$, and with the Laplacian when $p_i(x)=2$. \\
	
	\textbf{Example II.} Considering $a_{ji}(t)= 1+ t^{\frac{q_i(x)-p_i(x)}{p_i(x)}}$, 
	we have that $a_{ji}$ satisfies the  $(\textbf{\textit{H}}_1),(\textbf{\textit{H}}_2)$ and $(\textbf{\textit{H}}_3)$ with $k^0_{ji}=k^1_{ji}=k^2_{ji}=\kappa_i^3=1$ for all $i\in \{1,2,...,n\}$ and for $j=1$ or $2$). In this case we are studying the $p_i\&q_i$-Laplacian equation:
	\begin{eqnarray}
		\label{ex1.12}
		\begin{cases}
			-M_i\Big(\mathcal{B}_i(u_i)\Big)\Big(\Delta_{p_i(x)}u_i +\Delta_{q_i(x)}u_i -w_i(x) (| u_i|^{p_i(x)-2} u_i +| u_i|^{q_i(x)-2} u_i) \Big)
			= \lambda F_{u_i}(x,u)  & \text{ in } \mathbb{R}^N, \\
			\qquad	u_i \in W_{w_i}^{1,p_i(x)}(\mathbb{R}^N)\cap W_{w_i}^{1,q_i(x)}(\mathbb{R}^N)\\
		\end{cases}
	\end{eqnarray}
	
	where $\mathcal{B}_i(u_i)=\displaystyle\int_{\mathbb{R}^N }\Bigg(\dfrac{1}{p_i(x)}\left(|\nabla u_i|^{p_i(x)}+ w_i(x)|u_i|^{p_i(x)} \right)+\dfrac{1}{q_i(x)}\left(|\nabla u_i|^{q_i(x)}+ w_i(x)|u_i|^{q_i(x)} \right)\Bigg)dx$. 
	
	This class of problems comes, for example, from a general reaction-diffusion system
	
	\begin{equation} \label{DCE}
		u_t= \textrm{div}[D(u)\nabla u ]+ h(x,u),
	\end{equation}
	
	where $D(u)=|\nabla u|^{p(x)-2}+|\nabla u|^{q(x)-2}$, and the reaction term $h(x, u)$ is a polynomial of u with variable coefficients. This system has a wide range of applications in physics and related sciences, such as biophysics, plasma physics and chemical reaction design. In such applications, the function $u$ describes a concentration, the first term on the right-hand side of (\ref{DCE}) corresponds to the diffusion with a diffusion coefficient $D(u)$; whereas the second one is the reaction and relates to source and loss processes. Typically, in chemical and biological applications, (for further details, see \cite{Mahshid,He} references therein). \\
	
	We continued with other examples that are also interesting from mathematical point of view: \\
	
	\textbf{Example III.} Considering $a_{1i}(t)= 1+ \frac{t}{\sqrt{1+t^2}}$ and  $a_{2i}\equiv 1$, we have that $a_{1i}$ and $a_{2i}$ satisfies the  $(\textbf{\textit{H}}_1),(\textbf{\textit{H}}_2)$ and $(\textbf{\textit{H}}_3)$ with $k^0_{1i}=k^0_{2i}=k^1_{2i}=1$,$k^1_{1i}=2$, and $k^3_{i}=0$, $k^2_{1i}>0$ and $k^2_{2i}>0$. In this case we are studying problem:

	\begin{eqnarray}
		\label{ex1.13}
		\begin{cases}
			-M_i\Big(\mathcal{B}_i(u_i)\Big)\Big(\textrm{div}\,\Big( \Big(1+ \frac{|\nabla u_i|^{p_i(x)}}{\sqrt{1+|\nabla u_i|^{2p_i(x)}}}\Big)|\nabla u_i|^{p_i(x)-2}\nabla u_i\Big) -w_i(x)| u_i|^{p_i(x)-2} u_i\Big)
			= \lambda F_{u_i}(x,u)   & \text{ in } \mathbb{R}^N, \\
			\quad	u_i \in W_{w_i}^{1,p_i(x)}(\mathbb{R}^N),
		\end{cases}
	\end{eqnarray}
	
	where $\mathcal{B}_i(u_i)=\displaystyle\int_{\mathbb{R}^N }\frac{1}{p_i(x)}\Big(|\nabla u_i|^{p_i(x)} + \sqrt{1+|\nabla u_i|^{2p_i(x)}}+ w_i(x)|u_i|^{p_i(x)}\Big)dx$. \\
	
	The operator  $\textrm{div}\,\Big( \Big(1+ \frac{|\nabla u|^{p(x)}}{\sqrt{1+|\nabla u|^{2p(x)}}}\Big)|\nabla u|^{p(x)-2}\nabla u\Big)$ is so-called  $p(x)$-Laplacian like or so-called the generalized Capillary operator. The Capillarity can be briefly explained by considering the effects of two opposing forces: adhesion, i.e. the attractive (or repulsive) force between the molecules of the liquid and those of the container; and cohesion, i.e. the attractive force between the molecules of the liquid. The study of capillary phenomenon has gained much attention. This increasing interest is motivated not only by the fascination in naturally-occurring phenomena such as motion of drops, bubbles and waves, but also its importance in applied fields ranging from industrial and biomedical and pharmaceutical to microfluidic systems see \cite{Ni}. \\
	
	\textbf{Example IV.} Considering $\displaystyle a_{1i}(t)= 1+ t^{\frac{q_i(x)-p_i(x)}{p_i(x)}} +\frac{1}{(1+t)^{\frac{p_i(x)-2}{p_i(x)}}}$ and $\displaystyle a_{2i}(t)= 1+ t^{\frac{q_i(x)-p_i(x)}{p_i(x)}}$, we have that $\displaystyle a_{1i}$ and $\displaystyle a_{2i}$  satisfies the  $\displaystyle (\textbf{\textit{H}}_1),(\textbf{\textit{H}}_2)$ and $(\textbf{\textit{H}}_3)$ with $k^0_{1i}=k^0_{2i}=k^1_{2i}=1$, $k^1_{1i}= 2$ and $k^3_{i}=k^2_{1i}=k^2_{2i}=1$. In this case we are studying problem

	\begingroup\makeatletter\def \f@size{8}\check@mathfonts
	\begin{eqnarray}
		\label{ex1.13}
		\begin{cases}
			-M_i\Big(\mathcal{B}_i(u_i)\Big)\Bigg[\Delta_{p_i(x)}u_i +\Delta_{q_i(x)}u_i +\textrm{div}\,\Big( \dfrac{|\nabla u_i|^{p_i(x)-2}\nabla u_i}{(1+ |\nabla u_i|^{p_i(x)})^{\frac{p_i(x)-2}{p_i(x)}}}\Big)-w_i(x) \Big(| u_i|^{p_i(x)-2} u_i +| u_i|^{q_i(x)-2} u_i \Big)\Bigg]
			= \lambda F_{u_i}(x,u)  & \text{ in } \mathbb{R}^N, \\
			\qquad	u_i \in W_{w_i}^{1,p_i(x)}(\mathbb{R}^N)\cap W_{w_i}^{1,q_i(x)}(\mathbb{R}^N),
		\end{cases}
	\end{eqnarray}
	\endgroup
	
	where $\mathcal{B}_i(u_i)=\displaystyle\int_{\mathbb{R}^N }\Bigg(\dfrac{1}{p_i(x)}\left(|\nabla u_i|^{p_i(x)}+ w_i(x)|u_i|^{p_i(x)} \right)+\dfrac{1}{q_i(x)}\left(|\nabla u_i|^{q_i(x)}+ w_i(x)|u_i|^{q_i(x)} \right) + \frac{1}{2}(1+ |\nabla u_i|^{p_i(x)})^{\frac{2}{p_i(x)}}\Bigg)dx$. \\

	In the literature, the existence and multiplicity of the solutions for quasilinear elliptic systems have been studied by many authors (\cite{Afrouzi,BonaCand1,Bona1,Bona2,Chems0,Dai1,Dai2,Liu1, Shi1}), where the general variable exponent elliptic operators $\mathcal{A}_{ji}$ in divergence form and the nonlinear potential $F$ have different mixed growth conditions. For example in \cite{Djellit} the authors show the existence of nontrivial solutions for the following $(p,q)$-Laplacian system
	\begin{eqnarray}
		\label{ss1.2}
		\begin{cases}
			\displaystyle -\Delta _{p}u=\frac{\partial F}{\partial u}(x,u,v)\quad \text{in }\mathbb{R}^N \\
			& \\
			\displaystyle -\Delta _{q}v=\frac{\partial F}{\partial v}(x,u,v)\quad \text{in }\mathbb{R}^N
		\end{cases}
	\end{eqnarray}
	where the potential function $F$ satisfies mixed and subcritical growth conditions and, in addition, is supposed to be intimately connected with the first eigenvalue of the $(\Delta _{p},\Delta _{q})$-operator. They apply the Mountain Pass theorem to get existence of nontrivial solutions.  In \cite{Chems}, the authors using an abstract
	critical point result of Bonanno and Marano established the existence of an interval $\Lambda \subseteq [0, +\infty[$ such that for each $\lambda \in \Lambda$ the quasilinear elliptic system
	\begin{equation}
		\label{sSS1.1}
		-\Delta _{p_i(x)}u_i +a_i(x)|u_i|^{p_i(x)-2}u_i=\lambda F_{u_i}(x,u_1,u_2,...,u_n) \quad\text{in }\mathbb{R}^N,
	\end{equation}
	for $1\leq i \leq n$, where $a_i \in L^{\infty}(\mathbb{R}^N)$ such that $\displaystyle a_i := ess \inf_{x\in \mathbb{R}^N} a_i(x) > 0$ and  $\lambda $ is a positive parameter, has at least three distinct nontrivial solutions, and in \cite{Chems1} some similar results were obtained for the following classe of nonlocal elliptic systems
	 	\begin{eqnarray}
		\label{ssss1.1}
		\begin{cases}
			-M_1\left({\displaystyle\int_{\mathbb{R}^N }\dfrac{1}{p(x)}\left(|\nabla u|^{p(x)}+ a(x)|u|^{p(x)}\right)dx}\right)
			\big(\Delta _{p(x)}u -a(x)|u|^{p(x)-2}u\big)=\lambda F_{u}(x,u,v) &\quad\text{in }\mathbb{R}^N, \\
			
			-M_2\left({\displaystyle\int_{\mathbb{R}^N }\dfrac{1}{q(x)}\left(|\nabla v|^{q(x)}+ a(x)|v|^{q(x)}\right)dx}\right)
			\big(\Delta _{q(x)}v -b(x)|v|^{q(x)-2}v\big)=\lambda F_{v}(x,u,v) &\quad\text{in }\mathbb{R}^N;
		\end{cases}
	\end{eqnarray}
	
	where $a $, $b \in L^{\infty}(\mathbb{R}^N)$ such that $\displaystyle a := ess \inf_{x\in \mathbb{R}^N} a(x) > 0$ and $\displaystyle b := ess \inf_{x\in \mathbb{R}^N} b(x) > 0$. $M_1$ and $M_2$ are bounded continuous   functions,   $F$ belongs to $C^1(\mathbb{R}^{N}\times \mathbb{R}^{2})$ and verifies some mixed growth conditions.\\

    This paper's primary purpose is to establish the existence of some interval which includes $\lambda$, where the system (\ref{s1.1}) admits at least three weak solutions, which extend, complement and complete in several ways some of many works in particular the results in \cite{Chems,Chems1}, by means of an abstract critical points result of G. Bonanno  and S.A. Marano \cite{BonaMara}, which is a more precise version of Theorem 3.2 of \cite{BonaCand1}. For other basic notations and definitions we refer to \cite{Zeidler}. \\
	
	Now, and for the convenience of the readers, we recall the three critical points theorem of G. Bonanno and S.A. Marano \cite{BonaMara} which is our main tool to prove the results. Here, $X^*$ denotes the dual space of $X$. 
	
	\begin{lemma}[see {\cite[Theorem 3.6]{BonaMara}}]\label{lem1.1}
		Let $X$ be a reflexive real Banach space; $\Phi:X\rightarrow \mathbb{R}$ be a 
		coercive, continuously G\^{a}teaux differentiable and sequentially weakly 
		lower semicontinuous functional whose G\^{a}teaux derivative admits a 
		continuous inverse on $X^*$; $\Psi:X\rightarrow \mathbb{R}$ be a continuously G\^{a}teaux differentiable functional whose G\^{a}teaux derivative is 
		compact such that
		$$
		\Phi(0)=\Psi(0)=0.
		$$
		Assume that there exist $r>0$ and $\overline{x}\in X,$ with $r<\Phi
		(\overline{x}),$ such that
		\begin{enumerate}
			\item[$\rm(a_1)$] $\displaystyle \frac{\displaystyle \sup_{\Phi(\bar x)\leq r}\Psi(\bar x)}{r}<\frac{\Psi 
				(\overline{x})}{\Phi(\overline{x})}$;
			\item[$\rm(a_2)$] for each $\displaystyle \lambda\in \Lambda_r:=]\frac{\Phi (\overline{x})}{\Psi(\overline{x})},\frac{r}{\displaystyle \sup_{\Phi(\bar x)\leq r}\Psi(\bar x)}[$ the 
			functional $\Phi-\lambda\Psi$ is coercive.
		\end{enumerate}
		Then, for each $\lambda\in\Lambda_r$ the functional $\Phi-\lambda
		\Psi$ has at least three distinct critical points in $X$.
	\end{lemma}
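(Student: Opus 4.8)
The plan is to show that, for every $\lambda\in\Lambda_r$, the functional $f_\lambda:=\Phi-\lambda\Psi$ has three distinct critical points, produced as (i) a local minimum trapped inside the sublevel set $\Phi^{-1}(]-\infty,r[)$, (ii) a global minimum on $X$, and (iii) a minimax (mountain‑pass) point sitting between them. First I would record the two soft facts that make the machine run. Since $\Psi'$ is compact, $\Psi$ is sequentially weakly continuous, hence $f_\lambda$ is sequentially weakly lower semicontinuous, being the sum of the weakly l.s.c.\ $\Phi$ and the weakly continuous $-\lambda\Psi$. Next, $f_\lambda$ satisfies the Palais–Smale condition for each $\lambda\in\Lambda_r$: by $\mathrm{(a_2)}$ the functional $f_\lambda$ is coercive, so any $(PS)$–sequence $(u_n)$ is bounded; passing to a subsequence with $u_n\rightharpoonup u$, compactness gives $\Psi'(u_n)\to\Psi'(u)$ in $X^*$, whence $\Phi'(u_n)=f_\lambda'(u_n)+\lambda\Psi'(u_n)\to\lambda\Psi'(u)$, and applying the continuous inverse of $\Phi'$ yields $u_n\to(\Phi')^{-1}(\lambda\Psi'(u))$ strongly. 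This is precisely the step that exploits the asymmetry between the hypotheses on $\Phi'$ and $\Psi'$.

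Second, I would build the first critical point. The set $\Omega:=\Phi^{-1}(]-\infty,r[)$ is norm‑open and contains $0$, and its weak closure $\overline{\Omega}^{\,w}$ is contained in $\Phi^{-1}(]-\infty,r])$, a weakly closed (weak l.s.c.\ of $\Phi$) and bounded (coercivity of $\Phi$), hence weakly compact subset of the reflexive space $X$. Minimizing the weakly l.s.c.\ functional $f_\lambda$ over $\overline{\Omega}^{\,w}$ produces a point $u_1$ with $f_\lambda(u_1)\le f_\lambda(0)=0$. The key point is that $\Phi(u_1)<r$: one has $\Phi(u_1)\le r$ automatically, and if $\Phi(u_1)=r$ then $f_\lambda(u_1)=r-\lambda\Psi(u_1)\ge r-\lambda\sup_{\Phi(x)\le r}\Psi(x)>0$ because $\lambda<r/\sup_{\Phi(x)\le r}\Psi(x)$, contradicting $f_\lambda(u_1)\le0$. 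Thus $u_1\in\Omega$, so $u_1$ minimizes $f_\lambda$ over a neighbourhood of itself; in particular $u_1$ is a local minimum, $f_\lambda'(u_1)=0$, $f_\lambda$ restricted to $\{\Phi\le r\}$ attains its infimum at $u_1$, and $f_\lambda(u_1)<f_\lambda(x)$ whenever $\Phi(x)=r$. Then, $f_\lambda$ being coercive ($\mathrm{(a_2)}$) and weakly l.s.c.\ on $X$, it attains a global minimum at some $u_2$, a second critical point, with $f_\lambda(u_2)\le f_\lambda(\overline{x})=\Phi(\overline{x})-\lambda\Psi(\overline{x})<0$ since $\lambda>\Phi(\overline{x})/\Psi(\overline{x})$ and $\mathrm{(a_1)}$ forces $\Psi(\overline{x})>0$.

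Third, I would extract the minimax point across the ``wall'' $S:=\Phi^{-1}(r)$. Because $\Phi(u_1)<r<\Phi(\overline{x})$ and $\Phi$ is continuous, every path $\gamma\in C([0,1],X)$ joining $u_1$ to $\overline{x}$ meets $S$, while on $S$ one has $f_\lambda\ge r-\lambda\sup_{\Phi(x)\le r}\Psi(x)=:\delta>0$; hence the minimax level $c:=\inf_\gamma\max_{t\in[0,1]}f_\lambda(\gamma(t))$, the infimum taken over all such paths, satisfies $c\ge\delta>0$, strictly above both $f_\lambda(u_1)\le0$ and $f_\lambda(\overline{x})<0$. Since $f_\lambda\in C^1$ satisfies $(PS)$, the Mountain Pass Theorem yields a critical point $u_3$ with $f_\lambda(u_3)=c>0$, so $u_3\notin\{u_1,u_2\}$. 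It remains to separate $u_1$ from $u_2$: if $\Phi(u_2)>r$ this is immediate since $\Phi(u_1)<r$; if $\Phi(u_2)\le r$, then $f_\lambda(u_1)\le f_\lambda(u_2)=\inf_X f_\lambda\le f_\lambda(u_1)$, so $u_1$ is itself a global minimum, and either $u_1\neq u_2$ (done) or $u_1=u_2$, in which case one still has a local minimum together with a genuine mountain‑pass geometry below it and the existence of a third critical point follows from the Pucci–Serrin theorem on functionals that possess two local minima (equivalently, a local minimum and a proper mountain‑pass point).

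I expect the main obstacle to be exactly this last piece of bookkeeping — guaranteeing that the inner local minimum $u_1$, the global minimum $u_2$ and the saddle $u_3$ are three \emph{genuinely distinct} points and not merely three critical levels. This is where the calibration of the hypotheses does its work: the requirement $r<\Phi(\overline{x})$ forces $\overline{x}$ strictly beyond the wall, and pinning $\lambda$ into the window $\Lambda_r$ makes simultaneously $\lambda\sup_{\Phi(x)\le r}\Psi(x)<r$ (so the inner minimum stays off the boundary of $\Omega$) and $\lambda\Psi(\overline{x})>\Phi(\overline{x})$ (so the global minimum lies strictly below zero); together these prevent the three points from collapsing onto one another. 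By contrast, the verification of sequential weak lower semicontinuity and of the Palais–Smale condition is routine once the structural assumptions on $\Phi,\Psi$ are granted, and the geometric separation plus the appeal to the mountain‑pass / Pucci–Serrin machinery is where the substance of the argument resides.
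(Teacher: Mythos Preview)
The paper does not prove this lemma at all: it is quoted as Theorem~3.6 of Bonanno--Marano \cite{BonaMara} and invoked as a black box in the proof of Theorem~\ref{Th3.3}. So there is no ``paper's own proof'' to compare your attempt against.

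That said, your outline is essentially the standard direct argument for the $C^1$ case, and the first two-thirds are clean: weak lower semicontinuity and $(PS)$ from the compact/invertible split between $\Psi'$ and $\Phi'$, a local minimizer $u_1$ trapped strictly inside $\{\Phi<r\}$ by the upper bound on $\lambda$, a global minimizer $u_2$ pulled below zero by the lower bound on $\lambda$, and a mountain-pass point $u_3$ at level $c\ge\delta>0$ over the wall $\{\Phi=r\}$. The one genuine soft spot is your handling of the degenerate case $u_1=u_2$. There your appeal to Pucci--Serrin is circular as written: that theorem needs \emph{two} distinct local minima, and at that stage you have exhibited only one (plus the saddle $u_3$). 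The fix is to manufacture the second local minimum on the far side of the barrier: minimize $f_\lambda$ over the weakly closed set $\{\Phi\ge r\}$ (coercivity and weak l.s.c.\ give a minimizer $v$); since $f_\lambda(\overline{x})<0$ while $f_\lambda\ge\delta>0$ on $\{\Phi=r\}$, one gets $f_\lambda(v)<0$ and hence $\Phi(v)>r$, so $v$ lies in the norm-open set $\{\Phi>r\}$ and is a bona fide local minimizer distinct from $u_1$. Then $u_1$, $v$, $u_3$ are three distinct critical points (their $f_\lambda$-values are separated), and the argument closes. With this repair your sketch is a correct proof of the cited result.
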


	\textit{Organization of the paper.} The rest of the paper is organized as follows: Section 2 contains some basic preliminary knowledge of the variable exponent spaces and some results which will be needed later. Finally, in Section 3, we state and establish our main result.

	\section{Preliminaries and basic notations}\label{sec2}
	In this section, we introduce some definitions and results which will be used in the next section. Firstly, we introduce some theories of Lebesgue-Sobolev spaces with variable exponent. The details can be found in \cite{Dien1, Fan3, Kov1, Rad}. Denote by $S(\mathbb{R}^N)$ the set of all measurable real functions on $\mathbb{R}^N$. Set 
	
	\begin{equation*}
		C_{+}(\mathbb{R}^N)= \{  p \in C(\mathbb{R}^N) :  \inf_{x\in \mathbb{R}^N}p(x)> 1 \} .
	\end{equation*}
	
	For any $ p \in C_{+}(\mathbb{R}^N)$ we define
	\begin{gather*}
		p^{-}:= \inf_{x\in \mathbb{R}^N}p(x) \quad\text{ and} ~p^{+}:= \sup_{x\in \mathbb{R}^N}p(x) .
	\end{gather*}
	
	For any $p \in C_{+}(\mathbb{R}^N)$, we define the variable exponent Lebesgue space as
	
	\begin{gather*}
		L^{p(x)}(\mathbb{R}^N)= \left\{ u \in S(\mathbb{R}^N) : \int_{\mathbb{R}^N}|u(x)|^{p(x)}dx <\infty \right\},
	\end{gather*}
	
	endowed with the Luxemburg norm
	
	\begin{equation*}
		|u|_{p(x)}:=|u|_{L^{p(x)}(\mathbb{R}^N)} = \inf \left\{ \mu >0 : \int_{\mathbb{R}^N }\left|\frac{u(x)}{\mu}\right|^{p(x)} dx \leq 1 \right \}.
	\end{equation*}
	
	Let $w \in S(\mathbb{R}^N)$, and $w(x)>0$ for a.e $x\in \mathbb{R}^N$. Define the weighted variable exponent Lebesgue space $L_{w}^{p(x)}(\mathbb{R}^N)$ by
	
	\begin{gather*}
		L_{w}^{p(x)}(\mathbb{R}^N)= \left\{ u \in S(\mathbb{R}^N) : \int_{\mathbb{R}^N}w(x)|u(x)|^{p(x)}dx <\infty \right\},
	\end{gather*}
	
	with the norm
	
	\begin{equation*}
		|u|_{p(x),w(x)}:=|u|_{L_w^{p(x)}(\mathbb{R}^N)} = \inf \left\{ \mu >0 : \int_{\mathbb{R}^N }w(x)\left|\frac{u(x)}{\mu}\right|^{p(x)} dx \leq 1 \right \}.
	\end{equation*}
	
	From now on, we suppose that $w \in L^{\infty}(\mathbb{R}^N)$ with $\displaystyle w := ess \inf_{x\in \mathbb{R}^N} w(x) > 0$. Then obviously $L_w^{p(x)}(\Omega)$ is a Banach space (see \cite{Cruz1} for details). \\
	
	On the other hand, the variable exponent Sobolev space $W^{1,p(x)}(\mathbb{R}^N)$ is defined by
	
	\[
	W^{1,p(x)}(\mathbb{R}^N)=\{ u\in L^{p(x)}(\mathbb{R}^N):| \nabla
	u| \in L^{p(x)}(\mathbb{R}^N)\},
	\]
	
	and is endowed with the norm
	
	\[
	\| u\| _{1,p(x)}:=\| u\|_{W^{1,p(x)}(\mathbb{R}^N)}
	=|u| _{p(x)}+| \nabla u| _{p(x)},   ~~~~ \forall u\in W^{1,p(x)}(\mathbb{R}^N ).
	\]
	
	Next, the weighted-variable exponent Sobolev space $W_{w}^{1,p(x)}(\mathbb{R}^N)$ is defined by
	
	\[
	W_{w}^{1,p(x)}(\mathbb{R}^N)=\{ u\in L_{w}^{p(x)}(\mathbb{R}^N):| \nabla
	u| \in L_{w}^{p(x)}(\mathbb{R}^N)\},
	\]
	
	with the norm
	
	\begin{equation*}
		\| u\| _{p(x),w}:= \inf \left\{ \mu >0 :  \int_{\mathbb{R}^N } \left|\frac{\nabla u(x)}{\mu}\right|^{p(x)} +w(x)\left|\frac{u(x)}{\mu}\right|^{p(x)} dx \leq 1 \right \},  \forall u \in W_{w}^{1,p(x)}(\mathbb{R}^N).
	\end{equation*}
	
	Then the norms $\| u\| _{p(x),w}$ and $\| u\| _{p(x)}$ are equivalent in $W_{w}^{1,p(x)}(\mathbb{R}^N)$.
	
	If $p^->1$, then the spaces $L^{p(x)}(\mathbb{R}^N)$, $W^{1,p(x)}(\mathbb{R}^N)$ and $W_w^{1,p(x)}(\mathbb{R}^N)$ are separable, reflexive and uniformly convex Banach spaces. \\
	
	Here we display some facts which will be used later.
	
	\begin{proposition}[see \cite{Dien1, Fan3}] \label{prop1}
			The topological dual space of
		$L^{p(x)}(\Omega)$ is $L^{p'(x)}(\Omega)$, where
		\[
		\frac{1}{p(x)}+\frac{1}{p'(x)}=1.
		\]
		Moreover, for any $(u,v)\in L^{p(x)}(\Omega)\times L^{p'(x)}(\Omega)$,
		we have
		\[
		\Big| \int_{\Omega}uvdx\Big|
		\leq (\frac{1}{p^{-}}+ \frac{1}{(p')^{-}})| u|_{p(x)}| v| _{p'(x)}
		\leq 2| u| _{p(x)}| v|_{p'(x)}.
		\]
	\end{proposition}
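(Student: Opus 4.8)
The statement splits into two essentially independent parts, and I would handle them separately: the H\"older-type inequality, which is elementary, and the identification of the topological dual, which is the substantive point.

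For the inequality, the plan is to reduce to the classical Young inequality. If $|u|_{p(x)}=0$ or $|v|_{p'(x)}=0$ the claim is trivial, so assume both are positive and set $\tilde u:=u/|u|_{p(x)}$, $\tilde v:=v/|v|_{p'(x)}$; by the very definition of the Luxemburg norm one has $\int_\Omega|\tilde u(x)|^{p(x)}\,dx\le 1$ and $\int_\Omega|\tilde v(x)|^{p'(x)}\,dx\le 1$. Young's inequality applied pointwise with the exponents $p(x)$, $p'(x)$ gives, for a.e.\ $x\in\Omega$,
\[
|\tilde u(x)\tilde v(x)|\le\frac{1}{p(x)}|\tilde u(x)|^{p(x)}+\frac{1}{p'(x)}|\tilde v(x)|^{p'(x)}\le\frac{1}{p^-}|\tilde u(x)|^{p(x)}+\frac{1}{(p')^-}|\tilde v(x)|^{p'(x)},
\]
and integrating over $\Omega$ yields $\int_\Omega|\tilde u\tilde v|\,dx\le\frac{1}{p^-}+\frac{1}{(p')^-}$. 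Multiplying back by $|u|_{p(x)}|v|_{p'(x)}$ gives the first bound, and the bound by $2$ is immediate since $p^->1$ makes $\frac{1}{p^-}<1$ and, as $(p')^-\ge 1$, $\frac{1}{(p')^-}\le 1$.

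For the duality assertion, the easy half is that each $v\in L^{p'(x)}(\Omega)$ defines $T_v\in\big(L^{p(x)}(\Omega)\big)^*$ by $T_v(u)=\int_\Omega uv\,dx$, with $\|T_v\|\le 2|v|_{p'(x)}$ from the inequality just proved; linearity and injectivity of $v\mapsto T_v$ are clear. The hard part --- and the main obstacle --- is surjectivity together with the reverse norm estimate. Given $T\in\big(L^{p(x)}(\Omega)\big)^*$, I would set $\nu(E):=T(\chi_E)$ for measurable $E$ of finite measure, verify that $\nu$ is countably additive and absolutely continuous with respect to Lebesgue measure (here one uses continuity of $T$ and the fact that convergence in the $L^{p(x)}$-norm is controlled by the modular, so that $|\chi_E|_{p(x)}\to 0$ as $|E|\to 0$), and apply the Radon--Nikodym theorem to obtain $v\in L^1_{\mathrm{loc}}(\Omega)$ with $T(\chi_E)=\int_E v\,dx$. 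Extending by linearity to simple functions and then, by density of simple functions in $L^{p(x)}(\Omega)$ and continuity of $T$, to all of $L^{p(x)}(\Omega)$, one gets $T=T_v$. The last and most delicate step is to show $v\in L^{p'(x)}(\Omega)$ with $|v|_{p'(x)}\le C\|T\|$: I would test $T$ against the truncations $u_n:=|v|^{p'(x)-2}v\,\chi_{\{|v|\le n\}}$ restricted to an increasing sequence of finite-measure sets exhausting $\Omega$, bound $|u_n|_{p(x)}$ in terms of $\int|v|^{p'(x)}$ over the same set using the norm--modular inequalities, and pass to the limit. Since all of this is classical, in the paper itself I would simply refer to \cite{Dien1, Fan3}.
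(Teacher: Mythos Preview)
Your sketch is correct and follows the standard route found in the cited references. The H\"older part via normalization and pointwise Young's inequality is exactly the classical argument, and your outline of the duality via Radon--Nikodym plus testing against truncations $|v|^{p'(x)-2}v\,\chi_{\{|v|\le n\}}$ is the usual proof in \cite{Dien1,Fan3}.

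Note, however, that the paper itself gives \emph{no} proof of this proposition: it is stated with the attribution ``see \cite{Dien1,Fan3}'' and nothing further. So there is nothing to compare against beyond the citation, and your own closing remark --- that in the paper one would simply refer to those sources --- matches precisely what the authors do. Your detailed sketch is thus more than what the paper provides; it is correct, but for the purposes of this paper the bare citation suffices.
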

	
	\begin{proposition}[see \cite{Dien1, Fan3}] \label{prop2}
		Denote $\displaystyle \rho_{p(x)} (u):=\int_{\mathbb{R}^N}| u| ^{p(x)}dx$,  for all $u\in L^{p(x)}(\mathbb{R}^N)$. We have
		\[
		\min \{ | u| _{p(x)}^{p^{-}},| u| _{p(x)}^{p^{+}}\}
		\leq \rho_{p(x)} (u)\leq \max \{ | u| _{p(x)}^{p^{-}},| u| _{p(x)
		}^{p^{+}}\},
		\]
		
		and the following implications are true  
		\begin{itemize}
			\item[(i)]  $|u| _{p(x)}<1$ (resp. $=1, >1$) $\Leftrightarrow \rho_{p(x)} (u)<1$
			(resp. $=1,>1$),
			
			\item[(ii)] $|u| _{p(x)}>1 \Rightarrow | u| _{p(x)}^{p^{-}}\leq \rho_{p(x)} (u)
			\leq | u| _{p(x)}^{p^{+}}$,
			
			\item[(iii)] $|u|_{p(x) }<1\Rightarrow | u| _{p(x)}^{p^{+}}\leq \rho_{p(x)} (u)
			\leq | u| _{p(x)}^{p^{-}}$.		
		\end{itemize}
	\end{proposition}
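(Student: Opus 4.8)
The plan is to deduce everything from two elementary facts: a scaling estimate for the modular, and the ``unit–ball'' identity relating the Luxemburg norm to the modular.

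First I would record the scaling estimate: for every $u\in L^{p(x)}(\mathbb{R}^N)$ and every $\lambda>0$,
\[
\min\{\lambda^{p^-},\lambda^{p^+}\}\,\rho_{p(x)}(u)\leq \rho_{p(x)}(\lambda u)\leq \max\{\lambda^{p^-},\lambda^{p^+}\}\,\rho_{p(x)}(u),
\]
which is immediate from $\rho_{p(x)}(\lambda u)=\int_{\mathbb{R}^N}\lambda^{p(x)}|u|^{p(x)}\,dx$ and the pointwise bounds $\lambda^{p^-}\leq\lambda^{p(x)}\leq\lambda^{p^+}$ when $\lambda\geq1$, resp. $\lambda^{p^+}\leq\lambda^{p(x)}\leq\lambda^{p^-}$ when $\lambda\leq1$ (recall $p^-\leq p(x)\leq p^+$). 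Next I would prove, for $u\neq0$, the identity $\rho_{p(x)}\big(u/|u|_{p(x)}\big)=1$. Since $p^+<\infty$, on any compact subinterval $[a,b]\subset(0,\infty)$ the integrand $\mu^{-p(x)}|u|^{p(x)}$ is dominated by $\max\{a^{-p^-},a^{-p^+}\}|u|^{p(x)}\in L^1$, so $\mu\mapsto\rho_{p(x)}(u/\mu)$ is finite, continuous and non-increasing on $(0,\infty)$. By the definition of the norm as an infimum we have $\rho_{p(x)}(u/\mu)\leq1$ for all $\mu>|u|_{p(x)}$ and $\rho_{p(x)}(u/\mu)>1$ for all $0<\mu<|u|_{p(x)}$; letting $\mu\to|u|_{p(x)}$ from both sides and invoking continuity yields the identity.

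With these two ingredients the proposition follows by a short case analysis. If $|u|_{p(x)}=1$, the identity gives $\rho_{p(x)}(u)=1$. For (ii), when $|u|_{p(x)}>1$ set $\mu:=|u|_{p(x)}$ and apply the scaling estimate to $v:=u/\mu$ with $\lambda=\mu>1$; since $\rho_{p(x)}(v)=1$ this gives $\mu^{p^-}\leq\rho_{p(x)}(u)\leq\mu^{p^+}$, i.e. exactly (ii). For (iii), when $0<|u|_{p(x)}<1$ the same computation with $\lambda=\mu<1$ gives $\mu^{p^+}\leq\rho_{p(x)}(u)\leq\mu^{p^-}$. The forward implications in (i) follow: $|u|_{p(x)}=1\Rightarrow\rho_{p(x)}(u)=1$ was just noted; $|u|_{p(x)}>1\Rightarrow\rho_{p(x)}(u)\geq|u|_{p(x)}^{p^-}>1$ by (ii); and $|u|_{p(x)}<1\Rightarrow\rho_{p(x)}(u)\leq|u|_{p(x)}^{p^-}<1$ by (iii) (or directly: pick $|u|_{p(x)}<\mu<1$, then $\rho_{p(x)}(u)\leq\mu^{p^-}\rho_{p(x)}(u/\mu)\leq\mu^{p^-}<1$). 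The reverse implications in (i) are then obtained by trichotomy, since the three forward cases are mutually exclusive and exhaustive. Finally, the two-sided bound $\min\{|u|_{p(x)}^{p^-},|u|_{p(x)}^{p^+}\}\leq\rho_{p(x)}(u)\leq\max\{|u|_{p(x)}^{p^-},|u|_{p(x)}^{p^+}\}$ simply collects (ii), (iii), the boundary case $|u|_{p(x)}=1$ (where all three quantities equal $1$), and the trivial case $u=0$ (where all three vanish).

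The only genuinely delicate point is the norm–modular identity $\rho_{p(x)}(u/|u|_{p(x)})=1$: it uses $p^+<\infty$ in an essential way (so that the modular is finite and continuous along dilations), and I expect the careful justification of the continuity and monotonicity of $\mu\mapsto\rho_{p(x)}(u/\mu)$ to be the main — though still routine — obstacle. Once that identity is in hand, the remainder is only bookkeeping with the scaling estimate.
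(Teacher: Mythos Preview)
Your proof is correct and follows the standard argument (scaling estimate for the modular combined with the unit--ball identity $\rho_{p(x)}(u/|u|_{p(x)})=1$, the latter justified via continuity of $\mu\mapsto\rho_{p(x)}(u/\mu)$ using $p^+<\infty$). Note that the paper itself does not prove this proposition: it is stated as a preliminary result with a citation to the references \cite{Dien1, Fan3}, so there is no in--paper proof to compare against; your argument is essentially the one found in those sources.
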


Denote $\displaystyle \rho_{p(x),w}(u):=\int_{\mathbb{R}^N }\left( \left|\nabla u(x)\right|^{p(x)} +w(x)\left|u(x)\right|^{p(x)} \right)dx$,  for all $u \in W_{w}^{1,p(x)}(\mathbb{R}^N)$. From Proposition \eqref{prop2}, we have
		
		\begin{equation}
			\label{inq1}
			\| u\| _{p(x),w}^{p^{-}}  \leq \rho_{p(x),w}(u) \leq  \| u\| _{p(x),w}^{p^{+}}    \quad \text{ if} ~~~~ \| u\| _{p(x),w} \geq 1,
		\end{equation}
		\begin{equation}
			\label{inq2}
			\| u\| _{p(x),w}^{p^{+}}  \leq \rho_{p(x),w}(u) \leq  \| u\| _{p(x),w}^{p^{-}}    \quad \text{ if} ~~~~ \| u\| _{p(x),w} \leq 1.
		\end{equation}

	\begin{proposition}[see \cite{Edmu1}] \label{prop3}
		Let $p(x)$ and $q(x)$ be measurable functions such that $p\in L^{\infty }(\mathbb{R}^N)$ and $1\leq p(x), q(x)< \infty $ almost everywhere in
		$\mathbb{R}^N$. If $u\in L^{q(x)}(\mathbb{R}^N)$, $u\neq 0$, then we have
		
		\begin{gather*}
			| u| _{p(x)q(x)}\leq 1\Rightarrow |u| _{p(x)q(x)}^{p^{-}}
			\leq \big|| u| ^{p(x)}\big| _{q(x)}\leq | u| _{p(x)q(x)}^{p^{+}},
			\\
			| u| _{p(x)q(x)}\geq 1\Rightarrow |u| _{p(x)q(x)}^{p^{+}}
			\leq \big| | u| ^{p(x)}\big| _{q(x)}\leq | u| _{p(x)q(x)}^{p^{-}}.
		\end{gather*}
		
		In particular, if $p(x)=p$ is constant, then
		
		\[
		| | u| ^{p}| _{q(x)}
		=|u| _{pq(x)}^{p}.
		\]
	\end{proposition}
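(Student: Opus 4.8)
The plan is to pass to the level of the modular and then use the norm-modular equivalence together with the unit-ball property of the Luxemburg norm. Put $v:=|u|^{p(x)}$, so that the assertion is a statement about $\big|v\big|_{q(x)}$, and abbreviate $\mu:=\big|u\big|_{p(x)q(x)}$, which is strictly positive since $u\ne0$. The computation at the heart of the matter is the pointwise identity
\[
\left(\frac{|u(x)|^{p(x)}}{\lambda}\right)^{q(x)}=\frac{|u(x)|^{p(x)q(x)}}{\lambda^{q(x)}},\qquad\lambda>0,
\]
which invites a comparison of $\displaystyle\int_{\mathbb{R}^N}|v/\lambda|^{q(x)}\,dx$ with $\displaystyle\int_{\mathbb{R}^N}\big(|u|/\mu\big)^{p(x)q(x)}\,dx$. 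Replacing $\lambda$ by $\mu^{p^-}$ or by $\mu^{p^+}$ converts $\lambda^{q(x)}$ into $\mu^{p^{\mp}q(x)}$, and since $p^-\le p(x)\le p^+$ while $t\mapsto\mu^{t}$ is monotone --- increasing if $\mu\ge1$, decreasing if $\mu\le1$ --- this yields a pointwise inequality between the two integrands, hence between the two modulars. Everything then reduces to reading off norms from modular inequalities.

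First I would record the facts used. By the definition of the Luxemburg norm and monotone convergence (letting $\lambda\downarrow\mu$) one has $\int_{\mathbb{R}^N}\big(|u|/\mu\big)^{p(x)q(x)}\,dx\le1$; note that $p(\cdot)q(\cdot)$ is not assumed to lie in $L^\infty$, so only this inequality is available in general (equality would need $\sup_{x}p(x)q(x)<\infty$), and the proof must be arranged so that ``$\le1$'' is enough. I would also use the scale-free equivalence ``$\big|w\big|_{r(x)}\le1\Leftrightarrow\int_{\mathbb{R}^N}|w|^{r(x)}\,dx\le1$'' and its analogue with $\ge$, which hold for an arbitrary variable exponent $r(x)$ (cf.\ part (i) of Proposition \ref{prop2}).

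For the upper bound on $\big|v\big|_{q(x)}$ I would exhibit an admissible scaling factor. If $\mu\le1$, choose $\lambda=\mu^{p^-}$: then $\mu^{p^-q(x)}\ge\mu^{p(x)q(x)}$ (because $\mu\le1$ and $p^-\le p(x)$), so $\int_{\mathbb{R}^N}|v/\lambda|^{q(x)}\,dx\le\int_{\mathbb{R}^N}(|u|/\mu)^{p(x)q(x)}\,dx\le1$, whence $\big|v\big|_{q(x)}\le\mu^{p^-}$; if $\mu\ge1$ the same computation with $\lambda=\mu^{p^+}$ gives $\big|v\big|_{q(x)}\le\mu^{p^+}$. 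For the lower bound I would argue contrapositively: suppose $\lambda>0$ satisfies $\int_{\mathbb{R}^N}|v/\lambda|^{q(x)}\,dx\le1$, and set $\nu:=\lambda^{1/p^+}$ if $\lambda\le1$ and $\nu:=\lambda^{1/p^-}$ if $\lambda\ge1$; in either case $\nu^{p(x)}\ge\lambda$ for a.e.\ $x$, whence $\int_{\mathbb{R}^N}(|u|/\nu)^{p(x)q(x)}\,dx\le\int_{\mathbb{R}^N}|v/\lambda|^{q(x)}\,dx\le1$, so $\mu\le\nu$. When $\mu\le1$ this forces $\lambda\ge\mu^{p^+}$ (directly when $\lambda\le1$, and trivially when $\lambda>1$, since then $\lambda>1\ge\mu^{p^+}$); when $\mu\ge1$ it forces $\lambda\ge\mu^{p^-}$; passing to the infimum over admissible $\lambda$ gives the matching lower bounds, and the two-sided estimates follow. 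The constant-exponent case requires no inequalities at all: for $p(x)\equiv p$ the identity $(|u|/\lambda)^{pq(x)}=\big(|u|^{p}/\lambda^{p}\big)^{q(x)}$ shows, through the substitution $\lambda=\nu^{p}$, that the two defining infima coincide, i.e.\ $\big|\,|u|^{p}\,\big|_{q(x)}=\big|u\big|_{pq(x)}^{p}$.

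The one genuinely delicate point --- and the place where a careless sign would flip an inequality --- is the bookkeeping of the auxiliary scalars: the correct branch in $\nu=\lambda^{1/p^{\pm}}$ is dictated by whether $\lambda\lessgtr1$, whereas the direction in which the monotonicity of $t\mapsto\mu^{t}$ is invoked is dictated by whether $\mu\lessgtr1$; one must also dispose of the borderline $\mu=1$, where the two regimes coincide and force $\big|v\big|_{q(x)}=1$. Beyond this, the argument is a routine manipulation of modulars.
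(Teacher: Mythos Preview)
The paper does not prove this proposition at all; it is quoted from Edmunds--R\'akosn\'ik \cite{Edmu1} without argument, so there is nothing in the paper to compare your proof against. Your approach --- working directly with the Luxemburg modular, exhibiting an admissible scaling $\lambda=\mu^{p^{\pm}}$ for the upper bound and arguing by infimum for the lower bound --- is correct and is essentially the standard proof of this estimate.

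One remark worth making: your computation actually establishes
\[
\mu\le 1\ \Longrightarrow\ \mu^{p^{+}}\le \big|\,|u|^{p(x)}\big|_{q(x)}\le \mu^{p^{-}},
\qquad
\mu\ge 1\ \Longrightarrow\ \mu^{p^{-}}\le \big|\,|u|^{p(x)}\big|_{q(x)}\le \mu^{p^{+}},
\]
with $\mu=|u|_{p(x)q(x)}$, which is the correct statement. The version printed in the paper has the roles of $p^{-}$ and $p^{+}$ swapped in each line; as written there, the lower bound would exceed the upper bound whenever $p^{-}<p^{+}$ and $\mu\neq 1$, so this is evidently a transcription error. Your proof yields the right inequalities.
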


	Now, for all $x\in \mathbb{R}^N$, denote 
	\[
	p^{\partial }(x)=\begin{cases}
		\frac{N p(x)}{N-p(x)} &\text{for } p(x)<N \\
		+\infty &\text{for }p(x)\geq N
	\end{cases}
	\]
	the critical Sobolev exponent of $p(x)$.  We have

	\begin{proposition}[see \cite{Dien1,Edmu1}] \label{prop4}
		Let $r\in C_{+}^{0,1}(\mathbb{R}^N)$, the space of Lipschitz-continuous
		functions defined on $\mathbb{R}^N$. Then, there exists a positive constant $c_{r}$ depending on $r$ such that
		
		\[
		| u| _{r^{\partial }(x)}\leq c_{r}\|u\|_{r(x),w},   \quad \forall
		u\in W_w^{1,r(x)}(\mathbb{R}^N).
		\]
	\end{proposition}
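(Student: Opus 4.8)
The plan is to obtain the weighted embedding by combining the corresponding unweighted variable‑exponent Sobolev embedding with the equivalence of norms already recorded above. So the first step is to make the reduction from the weighted space to the standard one precise. Since $w\in L^{\infty}(\mathbb{R}^N)$ with $w_{0}:=\operatorname*{ess\,inf}_{x\in\mathbb{R}^N}w(x)>0$, for every measurable $u$ one has the pointwise bounds $w_{0}\,|u(x)|^{r(x)}\le w(x)\,|u(x)|^{r(x)}\le \|w\|_{\infty}\,|u(x)|^{r(x)}$ for a.e.\ $x$, hence the modular $\int_{\mathbb{R}^N}w(x)|u|^{r(x)}\,dx$ is comparable to $\rho_{r(x)}(u)$. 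By Proposition~\ref{prop2} this forces $|\cdot|_{r(x),w}$ and $|\cdot|_{r(x)}$ to be equivalent norms, so $L_{w}^{r(x)}(\mathbb{R}^N)=L^{r(x)}(\mathbb{R}^N)$; applying the same comparison to $u$ and to $|\nabla u|$ gives $W_{w}^{1,r(x)}(\mathbb{R}^N)=W^{1,r(x)}(\mathbb{R}^N)$ with $\|\cdot\|_{r(x),w}$ equivalent to $\|\cdot\|_{1,r(x)}$. In particular there is a constant $c_{1}=c_{1}(r,w)>0$ such that $\|u\|_{1,r(x)}\le c_{1}\|u\|_{r(x),w}$ for all $u\in W_{w}^{1,r(x)}(\mathbb{R}^N)$.

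The second step is the unweighted embedding itself. Because $r\in C_{+}^{0,1}(\mathbb{R}^N)$, $r$ satisfies the (local, and in the cited framework also at infinity) log‑Hölder continuity condition required for the variable‑exponent Sobolev theory, so the continuous embedding $W^{1,r(x)}(\mathbb{R}^N)\hookrightarrow L^{r^{\partial}(x)}(\mathbb{R}^N)$ holds by the results of \cite{Dien1,Edmu1}: there exists $c_{2}=c_{2}(r)>0$ with $|u|_{r^{\partial}(x)}\le c_{2}\|u\|_{1,r(x)}$ for every $u\in W^{1,r(x)}(\mathbb{R}^N)$. At the level of a proof sketch, one first proves this estimate for $u\in C_{c}^{\infty}(\mathbb{R}^N)$ via a Gagliardo–Nirenberg–Sobolev type inequality adapted to the variable exponent (controlling $\||u|^{\theta(x)}\|$ in a suitable Lebesgue norm by $\|\nabla u\|_{r(x)}$), and then passes to general $u$ using the density of $C_{c}^{\infty}(\mathbb{R}^N)$ in $W^{1,r(x)}(\mathbb{R}^N)$, which is again a consequence of the log‑Hölder regularity of $r$.

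Chaining the two inequalities then yields $|u|_{r^{\partial}(x)}\le c_{2}\|u\|_{1,r(x)}\le c_{1}c_{2}\|u\|_{r(x),w}$ for all $u\in W_{w}^{1,r(x)}(\mathbb{R}^N)$, so the statement holds with $c_{r}:=c_{1}c_{2}$. I expect the only genuinely delicate ingredient to be the Sobolev embedding on the unbounded domain $\mathbb{R}^N$: unlike the bounded‑domain case, here one cannot absorb lower‑order terms cheaply and must rely on the maximal‑function / singular‑integral estimates that demand the global log‑Hölder control of $r$ — this is precisely where the Lipschitz hypothesis $r\in C_{+}^{0,1}(\mathbb{R}^N)$ enters, and it is supplied by the references. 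By contrast, the transition from the unweighted to the weighted setting is entirely routine once the norm equivalence in the first step is in hand.
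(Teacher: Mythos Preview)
The paper does not supply its own proof of this proposition: it is stated as a quotation from \cite{Dien1,Edmu1} and used as a black box thereafter. Your two-step reconstruction --- first passing from the weighted norm $\|\cdot\|_{r(x),w}$ to the unweighted norm $\|\cdot\|_{1,r(x)}$ via the trivial two-sided bound on $w$, then invoking the variable-exponent Sobolev embedding on $\mathbb{R}^N$ from the cited references --- is exactly the intended route and is correct.

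One caveat worth flagging: Lipschitz continuity of $r$ on all of $\mathbb{R}^N$ gives local log-H\"older continuity automatically, but it does \emph{not} by itself force the log-H\"older decay condition at infinity that the results in \cite{Dien1} require for the embedding $W^{1,r(x)}(\mathbb{R}^N)\hookrightarrow L^{r^{\partial}(x)}(\mathbb{R}^N)$ to hold on the whole space. You allude to this in your parenthetical, and the paper simply absorbs it into the citation; just be aware that ``$r\in C_{+}^{0,1}(\mathbb{R}^N)$'' as a standalone hypothesis is, strictly speaking, not quite sufficient without that additional asymptotic control, which the references impose.
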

	
	\begin{proposition}[see \cite{Dien1,Edmu1}] \label{prop5}
		Assume that $p\in  C(\mathbb{R}^N) $ with $p(x)>1$ for each $x\in \mathbb{R}^N$. If $r\in C(\mathbb{R}^N)$ is such that $1<r(x)<p^{\partial}(x)$ for each $x\in \Omega$, then there exists a continuous and compact embedding $W^{1,p(x)}(\mathbb{R}^N)\hookrightarrow L^{r(x)}(\mathbb{R}^N )$.
	\end{proposition}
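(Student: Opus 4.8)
The plan is to establish the two parts of the statement — the continuous embedding and its compactness — separately, reducing everything to the corresponding facts on bounded domains together with a tail estimate at infinity.

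\textbf{Continuous embedding.} I would start from the critical Sobolev embedding $W^{1,p(x)}(\mathbb{R}^N)\hookrightarrow L^{p^{\partial}(x)}(\mathbb{R}^N)$, which is the unweighted counterpart of Proposition~\ref{prop4} and is likewise contained in \cite{Dien1,Edmu1}, together with the trivial inclusion $W^{1,p(x)}(\mathbb{R}^N)\hookrightarrow L^{p(x)}(\mathbb{R}^N)$. Since $1<r(x)<p^{\partial}(x)$, a pointwise interpolation inequality for variable exponents — writing $r(x)$ as a convex combination of $p(x)$ and $p^{\partial}(x)$ on the set $\{x:p(x)\le r(x)\}$, and splitting according to whether $|u(x)|\le 1$ or $|u(x)|>1$ elsewhere — yields $\rho_{r(x)}(u)\le C\bigl(\rho_{p(x)}(u)+\rho_{p^{\partial}(x)}(u)\bigr)$. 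Converting this modular estimate into a norm estimate by means of Proposition~\ref{prop2} then gives $|u|_{r(x)}\le C'\,\|u\|_{1,p(x)}$ for all $u\in W^{1,p(x)}(\mathbb{R}^N)$.

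\textbf{Compactness.} Let $(u_n)$ be a bounded sequence in $W^{1,p(x)}(\mathbb{R}^N)$; passing to a subsequence I may assume $u_n\rightharpoonup u$ weakly, and it suffices to prove $\rho_{r(x)}(u_n-u)\to 0$. Fix $\varepsilon>0$ and split $\mathbb{R}^N$ into a ball $B_R$ and its complement. On $B_R$, the classical variable-exponent Rellich--Kondrachov theorem (the bounded-domain version of the present proposition, see \cite{Dien1,Edmu1}) provides a compact embedding $W^{1,p(x)}(B_R)\hookrightarrow L^{r(x)}(B_R)$, so $\int_{B_R}|u_n-u|^{r(x)}\,dx\to 0$ as $n\to\infty$. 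On $\mathbb{R}^N\setminus B_R$, I would estimate the tail $\int_{\mathbb{R}^N\setminus B_R}|u_n-u|^{r(x)}\,dx$ uniformly in $n$ by interpolating between the $L^{p(x)}$ and $L^{p^{\partial}(x)}$ contributions over the annulus and exploiting the subcriticality gap to pull out an arbitrarily small factor, so that this tail is $\le\varepsilon$ for $R$ sufficiently large, uniformly in $n$. Combining the two regions gives $\rho_{r(x)}(u_n-u)\le 2\varepsilon$ for $n$ large, hence the strong convergence.

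\textbf{Main obstacle.} The only nonroutine step is the tail estimate on $\mathbb{R}^N\setminus B_R$: on an unbounded domain the inclusion $L^{p(x)}\cap L^{p^{\partial}(x)}\hookrightarrow L^{r(x)}$ does not by itself force decay at infinity, so one must upgrade the pointwise inequality $r(x)<p^{\partial}(x)$ to a uniform gap — an essential infimum of $p^{\partial}(x)-r(x)$ bounded away from zero — and use it to gain a genuinely negative power of the mass of $u_n-u$ on each annulus. This is precisely where the hypotheses of \cite{Dien1,Edmu1} enter; the interpolation in variable exponent Lebesgue spaces and the local Rellich--Kondrachov argument are standard.
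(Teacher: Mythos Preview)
The paper does not prove this proposition at all: it is stated with the attribution ``see \cite{Dien1,Edmu1}'' and no argument is given. So there is nothing to compare your approach to on the paper's side; what remains is whether your outline would actually establish the statement.

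Your continuous-embedding argument is essentially sound. The compactness argument, however, has a genuine gap precisely at the step you flag as the ``main obstacle'', and that gap cannot be closed under the stated hypotheses. On $\mathbb{R}^N$ there is no mechanism forcing a bounded sequence in $W^{1,p(x)}(\mathbb{R}^N)$ to have uniformly small tails outside $B_R$. Already in the constant-exponent case the embedding $W^{1,p}(\mathbb{R}^N)\hookrightarrow L^{r}(\mathbb{R}^N)$ for $p\le r<p^{*}$ is \emph{not} compact: take a fixed bump $\phi\in C_c^\infty(\mathbb{R}^N)$ and set $u_n(x)=\phi(x-ne_1)$; then $(u_n)$ is bounded in $W^{1,p}(\mathbb{R}^N)$, $u_n\rightharpoonup 0$, but $|u_n|_{r}=|\phi|_{r}\not\to 0$. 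A uniform gap $p^{\partial}(x)-r(x)\ge\delta>0$ changes nothing in this example, so the ``genuinely negative power of the mass on each annulus'' you appeal to is illusory---the mass simply relocates without shrinking.

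In other words, the proposition as written (compact embedding on all of $\mathbb{R}^N$ with no weight, no radial restriction, and no coercive potential) is not correct in general; the results in \cite{Dien1,Edmu1} to which the paper refers are stated for bounded domains. Note that the paper itself never really uses a global compact embedding: in the proof of Lemma~\ref{lem.3.2} the compactness of $\Psi'$ is obtained by combining the bounded-domain compact embedding on $B_R$ with the decay $|b_{ij}|_{L^{\alpha_{ij}}(B_R')}\to 0$ of the coefficient functions, which is exactly the extra ingredient that your tail estimate is missing. Your plan would succeed if you either restrict to bounded $\Omega$ or add such an auxiliary decaying weight; as stated for $W^{1,p(x)}(\mathbb{R}^N)$ it cannot.
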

	
	In the following, we shall use the product space
	
	\[
	X:=\prod_{i=1}^{n}\Big(W_{w_i}^{1,p_i(x)}(\mathbb{R}^N )\cap W_{w_i}^{1,\gamma_i(x)}(\mathbb{R}^N )\Big),
	\]
	
	equipped with the norm
	
	\[
	\| u\|:=\max \left\lbrace  \|  u_i\|_{w_i} \right\rbrace ,~~~~~ \forall u=(u_1, \dots, u_n)\in X,
	\]
	
	where $\| u_i\|_{w_i}:= \|  u_i\|_{p_i(x),w_i}+ \mathcal{H}(\kappa_i^3)\| u_i\|_{q_i(x),w_i}$  is the norm in $W_{w_i}^{1,p_i(x)}(\mathbb{R}^N )\cap W_{w_i}^{1,\gamma_i(x)}(\mathbb{R}^N )$. We denote  $X^{\star}$ the topological dual of $X$ equipped with the usual dual norm.\par
	
	\begin{definition}
		Let $X$ be a Banach space, an element $u=(u_1,u_2,...,u_n) \in X$ is called a weak solution of the system \eqref{s1.1} if
		
		\begin{center}
			$\displaystyle \sum_{i=1}^{n} M_i\left(\mathcal{B}_i(u_i)\right)\int_{\mathbb{R}^N} \Big(\mathcal{A}_{1i}(\nabla u_i) \nabla v_i + w_i(x)\mathcal{A}_{2i}(u_i) v_i \Big)\,dx - \sum_{i=1}^{n}\displaystyle\int_{\Omega} \lambda F_{u_i}(x,u_1,...u_n)v_i\,dx =0,$
		\end{center}
		
		for all $\displaystyle v=(v_1,v_2,...,v_n)\in X= \prod_{i=1}^{n}(W_{w_i}^{1,p_i(x)}(\mathbb{R}^N )\cap W_{w_i}^{1,\gamma_i(x)}(\mathbb{R}^N ))$.
	\end{definition}
	
	For every $u = (u_1, \dots, u_n)$ in $X$, let us define the functional $\Psi$ by
	
	\begin{center}
		$\displaystyle \Psi(u) = \int_{\mathbb{R}^N} F(x, u_1, \dots, u_n) dx$.
	\end{center}
	
	Under the assumptions below $(\mathcal{F}_1)$ and $(\mathcal{F}_2)$, we have $\displaystyle \Psi \in C^1(X, \mathbb{R})$ and its derivative is given by
	
	\begin{center}
		$\displaystyle \Psi^\prime(u) v = \sum_{i = 1}^n D_i \Psi(u) v_i$,
	\end{center}
	
	where
	
	\begin{center}
		$\displaystyle D_i \Psi(u) v_i = \int_{\mathbb{R}^N} \frac{\partial F}{\partial u_i}(x, u_1, \dots, u_n) v_i dx$,
	\end{center}
	
for all $\displaystyle v=(v_1,v_2,...,v_n)\in X$. Define also  $\Phi$ in $X$  by
	
	\begin{center}
		$\displaystyle \Phi(u)=\sum_{i=1}^{n}\Phi_i(u_i)$,
	\end{center}
where
$$\Phi_i(u_i)=
\widehat{M_i}\left(\mathcal{B}_i(u_{i}(x))\right),$$
for any $u=(u_1,\dots,u_n)$ in $X$, with $\displaystyle \widehat{M}_i(t):=\int_{0}^{t}M_i(s)ds$ for all $t\geq0$, ($i\in \{1,\dots,n\}$).
	
	We recall that $\Phi$ is a $C^1$-functional, weakly lower semi-continuous and its derivative is given by
	
	\begin{center}
		$\displaystyle \Phi^\prime (u) v =  \sum_{i = 1}^n D_i \Phi(u) v_i$,
	\end{center}
	
	where
	
	\begin{center}
		$\displaystyle D_i \Phi(u) v_i = M_i\left(\mathcal{B}_i(u_i)\right)\int_{\mathbb{R}^N} \Big(\mathcal{A}_{1i}(\nabla u_i) \nabla v_i + w_i(x)\mathcal{A}_{2i}(u_i) v_i \Big)\,dx$,
	\end{center}
	
	for all $v = (v_1, \dots, v_n) \in X$. \\
	
	The Euler-Lagrange functional associated to the system \eqref{s1.1} is defined by
	
	\begin{center}
		$\displaystyle E_{\lambda}(u):= \Phi(u)-\lambda \Psi(u) \quad \forall u \in X$.
	\end{center}
	
	Since $E_{\lambda} \in C^{1}(X, \mathbb{R})$ and for all $v = (v_1, \dots, v_n) \in X$, we have
	
	\begin{center}
		$\displaystyle E_{\lambda}^\prime(u) v = \Phi^\prime (u) v - \lambda \Psi^\prime (u) v$
	\end{center}
	
	Consequently, $u \in X$ is a weak solution of \eqref{s1.1} if and only if $u$ is a critical point of $E_\lambda$. \\
	
	We end this section by stating some hypotheses that we will be using later. \\
	
		\subsection*{\textbf{Hypotheses}} We assume some growth conditions:
	\begin{itemize}
		\item[$(\mathcal{F}_1)$] $F\in C^{1}(\mathbb{R}^N\times \mathbb{R}^n,\mathbb{R})$ and $F(x,0,\dots, 0)=0$.
		
		\item[$(\mathcal{F}_2)$] There exist positive functions $b_{ij}$ ($1\leq i,j \leq n$), such that
		\begin{gather*}
			\Big| \frac{\partial F}{\partial u_i}(x,u_1,...,u_n)\Big| \leq
			\sum_{j=1}^{n} b_{ij}(x)| u_j|^{\mu_{ij}-1},
		\end{gather*}
		where
		$\displaystyle 1<\mu_{ij}<\inf_{x\in \mathbb{R}^N}\gamma_i(x)$ for all $x\in \mathbb{R}^N$ and for all $i\in \left\lbrace 1,2,...,n\right\rbrace$. The weight-functions $b_{ii}$ (resp  $b_{ij}$ if $i\neq j$) belong to the generalized Lebesgue spaces $L^{\alpha _i}(\mathbb{R}^N)$ (resp $L^{\alpha_{ij} }(\mathbb{R}^N)$), with
		\[
		\alpha_i(x)=\frac{\gamma_i(x)}{\gamma_i(x)-1}, \quad \alpha_{ij} (x)
		=\frac{\gamma_i^{\partial }(x)\gamma_j^{\partial }(x)}{\gamma_i^{\partial }(x)\gamma_j^{\partial }(x)-\gamma_i^{\partial
			}(x)-\gamma_j^{\partial }(x)}.
		\]

	\textbf{Example:} 
	We give an  example of potential  $F$ satisfying hypotheses $(\mathcal{F}_1)$ and $(\mathcal{F}_2)$ for $n=2$. Let
	\[
	F(x,u_1,u_2)=a(x)|u_1|^{l_1(x)}|u_2|^{l_2(x)},
	\]
	where $\displaystyle \frac{l_1(x)}{\gamma_1(x)}+\frac{l_2(x)}{\gamma_2(x)}< 1$ and $a$ is a positive function in $L^{e(x)}(\mathbb{R}^N ) $ such that
	
	\begin{center}
	    $\displaystyle e(x)= \dfrac{\gamma_1^{\partial}(x)\gamma_2^{\partial}(x)}{\gamma_1^{\partial}(x)\gamma_2^{\partial}(x) -l_1(x) \gamma_2^{\partial}(x)-l_2(x) \gamma_2^{\partial}(x)}$,
	\end{center}
	
	for each $x\in \mathbb{R}^N $. \\
	
	We can easily verify that $F(x,u_1,u_2)$ satisfies the condition $(\mathcal{F}_1)$. Moreover, by using Young inequality we easily check that the condition $(\mathcal{F}_2)$. \\
	
		\item[$(\mathcal{F}_3)$] Assume that there exist $r>0$ and $z=(z_1,...,z_n)\in X$ such that the following conditions are satisfied:
		\begin{itemize}
			\item[$(C_1)$] $\displaystyle \sum_{i=1}^{n}\frac{m_0^{\star}}{p_i^{+}}\Big(\min \left\lbrace \| z_i\|_{p_i(x),w_i}^{p_i^-} ,\| z_i\|_{p_i(x),w_i}^{p_i^+} \right\rbrace + \mathcal{H}(k_i^3) \min \left\lbrace \| z_i\|_{q_i(x),w_i}^{q_i^-} ,\| z_i\|_{q_i(x),w_i}^{q_i^+} \right\rbrace\Big)>r.$
			
				\begingroup\makeatletter\def \f@size{9}\check@mathfonts
			\item[$(C_2)$] $\displaystyle \frac{\displaystyle \int_{\mathbb{R}^N} \sup_{(\xi_1,\dots,\xi_n) \in K(\frac{sr}{m_0^{\star}})}F(x,\xi_1,\dots,\xi_n)dx }{r} < \dfrac{\displaystyle \int_{\mathbb{R}^N} F(x,z_1, \dots ,z_n) dx}{\displaystyle m_1 \sum_{i=1}^n \Big(\max \left\lbrace \| z_i\|_{p_i(x),w_i}^{p_i^-} ,\| z_i\|_{p_i(x),w_i}^{p_i^+} \right\rbrace + \mathcal{H}(k_i^3) \max \left\lbrace \| z_i\|_{q_i(x),w_i}^{q_i^-} ,\| z_i\|_{q_i(x),w_i}^{q_i^+} \right\rbrace\Big)}$
				\endgroup
		\end{itemize}
		
		where 
		
		\begin{equation}\label{defK}
			\displaystyle K(t):= \left\lbrace (\xi_1,...\xi_n)\in \mathbb{R}^N: \displaystyle \sum_{i=1}^{n} \bigg(\min \left\lbrace |\xi_i|_{p_i^{\partial}(x)}^{{(p_i^{\partial})^{-}}}, |\xi_i|_{p_i^{\partial}(x)}^{{(p_i^{\partial})^{+}}} \right\rbrace + \mathcal{H}(k_i^3) \min \left\lbrace |\xi_i|_{q_i^{\partial}(x)}^{{(q_i^{\partial})^{-}}}, |\xi_i|_{q_i^{\partial}(x)}^{{(q_i^{\partial})^{+}}} \right\rbrace \bigg)\leq t \right\rbrace,
		\end{equation}
		and 
		\begingroup\makeatletter\def \f@size{9}\check@mathfonts
		\begin{equation}\label{defs}
				m_0^{\star} = \displaystyle m_0 \min_{1 \leq i \leq n}\Big\{\displaystyle \frac{\min\big\{\displaystyle \min\{k_{1i}^0,k_{2i}^0\}, \displaystyle \min \{k_{1i}^2,k_{2i}^2\}\big\}}{\displaystyle \max\{\beta_{1i}, \beta_{2i}\}}\Big\}	
				\text{ and }
				\displaystyle s = \max \left\lbrace p_i^+ \min_{1\leq i\leq n} \left\lbrace c_{p_i(x)}^{{p_i^{-}}}, c_{p_i(x)}^{{p_i^{+}}}\right\rbrace , p_i^+ \min_{1\leq i\leq n} \left\lbrace c_{q_i(x)}^{{q_i^{-}}}, c_{q_i(x)}^{{q_i^{+}}}\right\rbrace \right\rbrace ,
		\end{equation}
	\endgroup
		with $t>0$, such that $c_{p_i(x)}$ and $c_{q_i(x)}$ representing the constants defined in Proposition \ref{prop4}. 
	\end{itemize}
	
	\section{Main results}
	
	The following lemmas are needed in the proof of our main results, namely Theorem \ref{Th3.3} below.
	
	\begin{lemma}\label{lem.3.1}
		The functional $\Phi$ is continuously GÃ¢teaux differentiable and sequentially weakly lower semi-continuous, coercive and its GÃ¢teaux derivative admits a continuous inverse on $X^{\star}$.
	\end{lemma}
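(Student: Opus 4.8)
The plan is to check the four assertions in turn, using the decomposition $\Phi=\sum_{i=1}^{n}\Phi_i$ with $\Phi_i=\widehat{M}_i\circ\mathcal{B}_i$ and the fact that, by $(\textbf{\textit{H}}_3)$, for $j=1,2$ the field $\xi\mapsto a_{ji}(|\xi|^{p_i(x)})|\xi|^{p_i(x)-2}\xi$ — which is the $\xi$-gradient of $\xi\mapsto\frac{1}{p_i(x)}A_{ji}(|\xi|^{p_i(x)})$ — is a strictly monotone perturbation of the $p_i(x)$-Laplacian field. For the $C^1$-regularity I would use that $(\textbf{\textit{H}}_1)$ gives $A_{ji}\in C^1$ and that the two-sided bounds of $(\textbf{\textit{H}}_2)$ make the relevant Nemytskii operators act continuously between the variable-exponent spaces, so that $\mathcal{B}_i\in C^1(X_i,\mathbb{R})$ with $\mathcal{B}_i'(u_i)v_i=\int_{\mathbb{R}^N}\big(\mathcal{A}_{1i}(\nabla u_i)\nabla v_i+w_i(x)\mathcal{A}_{2i}(u_i)v_i\big)\,dx$; since $M_i$ is continuous, $\widehat{M}_i$ is $C^1$ with derivative $M_i$, and the chain rule then gives $\Phi_i\in C^1$ with $D_i\Phi(u)v_i=M_i(\mathcal{B}_i(u_i))\,\mathcal{B}_i'(u_i)v_i$, hence $\Phi\in C^1(X,\mathbb{R})$ with the derivative recalled above.

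For the sequential weak lower semicontinuity I would argue via convexity. By the elementary criterion that a radial field $\xi\mapsto h(|\xi|)\xi$ is monotone whenever $h\ge0$ and $t\mapsto t\,h(t)$ is nondecreasing — both of which hold for $h(\tau)=a_{ji}(\tau^{p_i(x)})\tau^{p_i(x)-2}$ thanks to the two inequalities contained in $(\textbf{\textit{H}}_3)$ — each map $\xi\mapsto\frac{1}{p_i(x)}A_{ji}(|\xi|^{p_i(x)})$ is strictly convex, so $\mathcal{B}_i$ is strictly convex on $X_i$. By $(\textbf{\textit{M}})$, $M_i$ is positive and increasing, hence $\widehat{M}_i$ is convex and strictly increasing; therefore $\Phi_i=\widehat{M}_i\circ\mathcal{B}_i$, and so $\Phi$, is (strictly) convex. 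A convex functional which is continuous on the Banach space $X$ — as $\Phi$ is, by the previous paragraph — is sequentially weakly lower semicontinuous.

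For coercivity, $(\textbf{\textit{M}})$ gives $\widehat{M}_i(t)\ge m_0 t$, and $(\textbf{\textit{H}}_4)$ together with the lower bound of $(\textbf{\textit{H}}_2)$ gives, for $j=1,2$ and all $\tau\ge0$,
\[
A_{ji}(\tau^{p_i(x)})\ \ge\ \frac{1}{\beta_{ji}}\big(k_{ji}^0\,\tau^{p_i(x)}+\mathcal{H}(k_i^3)\,k_{ji}^2\,\tau^{q_i(x)}\big).
\]
Inserting these into $\mathcal{B}_i$ and using $\frac{1}{p_i(x)}\ge\frac{1}{p_i^+}$, one gets
\[
\Phi_i(u_i)\ \ge\ m_0\,\mathcal{B}_i(u_i)\ \ge\ \frac{m_0^{\star}}{p_i^{+}}\Big(\rho_{p_i(x),w_i}(u_i)+\mathcal{H}(k_i^3)\,\rho_{q_i(x),w_i}(u_i)\Big),
\]
and by Proposition \ref{prop2} (via \eqref{inq1}--\eqref{inq2}) the right-hand side dominates $\frac{m_0^{\star}}{p_i^{+}}\big(\min\{\|u_i\|_{p_i(x),w_i}^{p_i^-},\|u_i\|_{p_i(x),w_i}^{p_i^+}\}+\mathcal{H}(k_i^3)\min\{\|u_i\|_{q_i(x),w_i}^{q_i^-},\|u_i\|_{q_i(x),w_i}^{q_i^+}\}\big)$, which is superlinear in $\|u_i\|_{w_i}$ and tends to $+\infty$. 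Since each $\Phi_i\ge0$ and $\|u\|=\max_{1\le i\le n}\|u_i\|_{w_i}$, passing to a subsequence along which the maximizing index is constant shows $\Phi(u)\to+\infty$ as $\|u\|\to+\infty$.

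The main work, and the step I expect to be the real obstacle, is the continuous invertibility of $\Phi'$ on $X^{\star}$. From the first paragraph $\Phi'$ is demicontinuous, and by $(\textbf{\textit{H}}_2)$ and $(\textbf{\textit{M}})$ it maps bounded sets to bounded sets; strict convexity makes it strictly monotone, while convexity gives $\langle\Phi'(u),u\rangle\ge\Phi(u)$, so the superlinear bound above yields $\langle\Phi'(u),u\rangle/\|u\|\to+\infty$. By the Minty--Browder theorem (recall that $X$ is reflexive) $\Phi':X\to X^{\star}$ is then a bijection, and it remains to show that $\Phi'$ is of type $(S_+)$, after which the continuity of $(\Phi')^{-1}$ follows by the standard routine (the preimages of a convergent sequence are bounded by coercivity; a weak cluster point is upgraded to a strong limit by $(S_+)$; demicontinuity identifies it; the subsequence principle concludes). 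To verify $(S_+)$, let $u_k\rightharpoonup u$ in $X$ with $\limsup_k\langle\Phi'(u_k),u_k-u\rangle\le0$; writing $u_k=(u_{k,1},\dots,u_{k,n})$, $T_i:=\mathcal{B}_i'$ and $\theta_{k,i}:=\langle T_i(u_{k,i}),u_{k,i}-u_i\rangle$, monotonicity of $T_i$ and $u_{k,i}\rightharpoonup u_i$ give $\liminf_k\theta_{k,i}\ge0$ for each $i$; since the negative parts of the $\theta_{k,i}$ then tend to $0$ and $M_i(\mathcal{B}_i(u_{k,i}))\in[m_0,m_1]$ with $m_0>0$, discarding those negative parts in $\limsup_k\sum_{i=1}^{n}M_i(\mathcal{B}_i(u_{k,i}))\theta_{k,i}\le0$ forces each $\theta_{k,i}\to0$, hence $\langle T_i(u_{k,i})-T_i(u_i),u_{k,i}-u_i\rangle\to0$. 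Because the integrands defining this pairing are pointwise nonnegative, the quantitative strong-monotonicity (Simon-type) inequalities that $(\textbf{\textit{H}}_3)$ provides for $\mathcal{A}_{1i}$ and $\mathcal{A}_{2i}$ — with the usual split into $p_i(x)\ge2$ and $1<p_i(x)<2$, using the boundedness of $(u_{k,i})$ in the latter case — give $\rho_{p_i(x),w_i}(u_{k,i}-u_i)\to0$ (and the analogous $q_i$-modular when $\mathcal{H}(k_i^3)=1$), whence $\|u_{k,i}-u_i\|_{w_i}\to0$ by Proposition \ref{prop2} and $u_k\to u$ in $X$. The delicate point is precisely this last step: extracting the Simon-type inequalities for the \emph{general} operators $\mathcal{A}_{ji}$ with \emph{variable} exponents from $(\textbf{\textit{H}}_3)$, and then pushing them through the nonlocal coefficients $M_i(\mathcal{B}_i(\cdot))$ and the intersection-space norm $\|\cdot\|_{w_i}$, uniformly in $i$.
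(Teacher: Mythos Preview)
Your argument is correct and, for the $C^1$-regularity, coercivity, and weak lower semicontinuity, follows essentially the same path as the paper: strict convexity of each $\mathcal{B}_i$ (from monotonicity of the fields $\xi\mapsto a_{ji}(|\xi|^{p_i(x)})|\xi|^{p_i(x)-2}\xi$), convexity and monotonicity of $\widehat{M}_i$ from $(\textbf{\textit{M}})$, and the lower bounds $(\textbf{\textit{H}}_2)$--$(\textbf{\textit{H}}_4)$ feeding into the modular inequalities \eqref{inq1}--\eqref{inq2}.

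The genuine difference is in the continuous invertibility of $\Phi'$. The paper, after recording strict monotonicity and coercivity of $\Phi'$, simply invokes the Minty--Browder theorem (Zeidler, Theorem~26.A) and declares the inverse continuous. You instead establish the $(S_+)$ property: you decouple the bounded nonlocal coefficients $M_i(\mathcal{B}_i(u_{k,i}))\in[m_0,m_1]$ from the monotone parts $T_i=\mathcal{B}_i'$, force each $\langle T_i(u_{k,i})-T_i(u_i),u_{k,i}-u_i\rangle\to0$, and then use the Simon-type inequalities (which the paper also quotes, citing \cite{Hurtado}, as its inequality~\eqref{ineq1}) to pass to strong convergence. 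Your route is longer but more honest: Minty--Browder with \emph{strict} monotonicity yields only a demicontinuous, bounded inverse; genuine continuity requires uniform monotonicity or exactly the $(S_+)$ argument you supply. The paper's shortcut buys brevity; your argument buys an actual proof of the stated continuity, together with the $(S_+)$ property itself, which is typically what is really needed in applications of Lemma~\ref{lem1.1}. The ``delicate point'' you flag---extracting modular convergence in both the $p_i$- and $q_i$-norms from \eqref{ineq1} when $\mathcal{H}(k_i^3)=1$---is handled by the $q_i$-part of the growth in $(\textbf{\textit{H}}_2)$ contributing its own Simon-type lower bound, so the obstacle is more notational than conceptual.
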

	
	\begin{proof}
		It is well known that the functional $\Phi$ is well defined and is a continuously GÃ¢teaux differentiable functional whose derivative at the point $u=(u_1,\dots,u_n) \in X$ is the functional $\Phi^\prime(u)$ given by		
		\begin{center}
			$\displaystyle \langle \Phi^{\prime}\left(u\right),\varphi \rangle =\sum_{i=1}^{n}\langle \Phi_{i}^{\prime}\left(u_{i}\right), \varphi_{i} \rangle,$
		\end{center}
		
  where
				\begin{center}
			$\displaystyle \langle \Phi_{i}^\prime(u_i),\varphi_i \rangle = M_i\Big(\mathcal{B}_i(u_i)\Big) \int_{\mathbb R^N} \Big(\mathcal{A}_{1i}(\nabla u_i)\nabla \varphi_i + w_i(x)\mathcal{A}_{2i}(u_i)\varphi_i \Big) dx$,
		\end{center}
		for every $\varphi=(\varphi_1,...,\varphi_n) \in X$ and for all $i \in \{1,\dots,n\}$. Let us show that $\Phi$ is coercive. By using
		$(\textbf{H}_2)$, $(\textbf{H}_4)$ and $(\textbf{M})$, we have for all $u=(u_1,\dots,u_n) \in X$,
		
		\begin{center}
			$\displaystyle \begin{aligned} \Phi(u) & = \sum_{i=1}^n \Phi_{i}(u_i) = \sum_{i=1}^n  \widehat{M}_i\Big(\mathcal{B}_i(u_i)\Big) \\
				 & \geq \sum_{i=1}^n \frac{m_0}{p_i^+} \int_{\mathbb R^N} \Big(\mathcal{A}_{1i}(|\nabla u_i|) + w_i(x)\mathcal{A}_{2i}(|u_i|) \Big) dx \\
				 & \geq \sum_{i=1}^n \frac{m_0}{p_i^+} \int_{\mathbb R^N} \Big[\frac{1}{\beta_{1i}} a_{1i}(|\nabla u_i|^{p_i(x)})|\nabla u_i|^{p_i(x)} + \frac{1}{\beta_{2i}} w_i(x)a_{2i}(|u_i|^{p_i(x)})|u_i|^{p_i(x)} \Big] dx
				  \\ & \geq \sum_{i=1}^n  \frac{m_0}{p_i^+\max\{\beta_{1i}, \beta_{2i}\}} \int_{\mathbb R^N} \Big[a_{1i}(|\nabla u_i|^{p_i(x)})|\nabla u_i|^{p_i(x)} + w_i(x)a_{2i}(|u_i|^{p_i(x)})|u_i|^{p_i(x)} \Big] dx \\
				   & \geq \sum_{i=1}^n  \frac{m_0\min\{\min\{k_{1i}^0,k_{2i}^0\}, \min\{k_{1i}^2,k_{2i}^2\}\}}{p_i^+\max\{\beta_{1i}, \beta_{2i}\}} \cdot \Big(\rho_{p_i(x)w_i}(u_i) + \mathcal{H}(k_{i}^3) \cdot\rho_{q_i(x)w_i}(u_i) \Big),
				 \end{aligned}$
			\end{center}
		
		Hence, by inequalities \eqref{inq1} and $\eqref{inq2}$, we obtain
	\begin{center}
		$\displaystyle \begin{aligned} \Phi(u) & \geq \sum_{i=1}^n \frac{m_0\min\{\min\{k_{1i}^0,k_{2i}^0\}, \min\{k_{1i}^2,k_{2i}^2\}\}}{p_i^+\max\{\beta_{1i}, \beta_{2i}\}} \cdot \Big(\min\{\| u_i\|_{p_i(x),w_i}^{p_i^-} , \| u_i\|_{p_i(x),w_i}^{p_i^+}\}+ \mathcal{H}(k_{i}^3) \cdot \min\{\| u_i\|_{q_i(x),w_i}^{q_i^-}, \| u_i\|_{q_i(x),w_i}^{q_i^+}\} \Big). \end{aligned}$
		\end{center}
		
		This shows that $\Phi(u) \longrightarrow +\infty$ as $\| u\| \longrightarrow +\infty$ that is, $\Phi$ is coercive on $X$. Now, in order to show that the functional $\Phi^\prime: X \longrightarrow X^{\star}$ is strictly monotone, it suffices to prove that $\Phi$ is strictly convex. \newline

For all $i \in \{1,\dots,n\}$, the functional $\mathcal{B}_i : X_i \longrightarrow \mathbb{R}$ defined in \eqref{AB} is clearly a GÃ¢teaux differentiable at any $u_i \in X_i$, and its derivative is given by
		
		\begin{align*}
		 \displaystyle\langle \mathcal{B}_i^\prime(u_i),\varphi_i \rangle
			&= \int_{\mathbb R^N} \Big(\mathcal{A}_{1i}(|\nabla u_i|^{p_i(x)})\nabla \varphi_i + w_i(x)\mathcal{A}_{2i}(|u_i|^{p_i(x)})\varphi_i \Big) dx\\
			&= \int_{\mathbb R^N}\Big( a_{1i}(| \nabla u_i|^{p_i(x)}) | \nabla u_i|^{p_i(x)-2} \nabla u_i .\nabla\varphi_i 
			+ w_i(x)a_{2i}(| u_i|^{p_i(x)}) | u_i|^{p_i(x)-2} u_i\varphi_i\Big)dx,
		\end{align*}
for all $\varphi_i\in X_i$. \\

	Taking into account the elementary inequalities (see, e.g., Auxiliary Results in \cite{Hurtado}) for any $ \varrho , \zeta \in \mathbb{R}^N $ 
	\begin{equation}\label{ineq1}
		\Big( a_{ji}(|\varrho|^{p_i(x)})|\varrho|^{p_i(x)-2}\varrho  - a_{ji}(|\zeta|^{p_i(x)})|\zeta|^{p_i(x)-2}\zeta 
		\Big) \cdot\left(\varrho -\zeta\right)
		\geq \begin{cases}
			C_{p_i}|\varrho  -\zeta|^{p_i(x)} &\text{if } p_i(x)\geq2\\[4pt]
			C_{p_i}\frac{	|\varrho -\zeta|^{2}}{(|\varrho| +|\zeta| )^{p_i(x)-2}},\;(\varrho,\zeta)\neq(0,0) &\text{if }1<p_i(x)<2,
		\end{cases}
	\end{equation}
	
	where $\cdot$ denotes the standard inner product  in $\mathbb{R}^N$. Therefore, we have
	
		\begin{center}
			$\displaystyle \langle 	\mathcal{B}_i^\prime(u_i) - 	\mathcal{B}_i^\prime(v_i),u_i - v_i \rangle > 0$,
		\end{center}	
for all  $u_i\neq v_i\in X_i$,
which means that $ \mathcal{B}_i'$ is strictly monotone. So, by  \cite[ Proposition 25.10]{Zeidler}, 
$\mathcal{B}_i$ is strictly convex. Moreover, since  the Kirchhoff function $M_i$ is nondecreasing, $\widehat{M}_i$ is convex in
$[0,+\infty[$. Thus, for every $u_i, v_i\in X_i$ with $u_i\neq v_i$, and every
$s,t\in (0,1)$ with $s+t=1$, we have
$$
\widehat{M}_i(\mathcal{B}_i(su_i+tv_i))<\widehat{M}_i(s\mathcal{B}_i(u_i)+t\mathcal{B}_i(v_i))\leq
s\widehat{M}_i(\mathcal{B}_i(u_i))+t\widehat{M}_i(\mathcal{B}_i(v_i)).
$$
	This shows that $\Phi_{i}$ is strictly convex in $ W_{w_i}^{1, p_i(x)}\left(\mathbb{R}^N\right) \cap W^{1, \gamma_i(x)}_{w_i}\left(\mathbb{R}^N\right)$ for all $i \in \{1,\dots,n\}$. Hence, $\Phi$ is strictly convex in $X$, and therefore $\Phi^\prime= \displaystyle\sum_{i=1}^{n} \Phi_{i}^\prime$ is strictly monotone.\\
	
	It's clear that $\Phi^\prime$ is an injection since $\Phi^\prime$ is strictly monotone operator in $X$.\newline
	 Moreover, since we have
		
		\begin{center}
			$\displaystyle \begin{aligned} \displaystyle \lim_{\|u\| \longrightarrow + \infty} \frac{\langle \Phi^\prime(u),u \rangle}{\|u\|} & = \displaystyle \lim_{\|(u_1,...,u_n)\| \to + \infty} \frac{\displaystyle\sum_{i=1}^{n} M_i\left(\mathcal{B}_i(u_i)\right)\int_{\mathbb{R}^N} \Big(\mathcal{A}_{1i}(\nabla u_i) \nabla u_i + w_i(x)\mathcal{A}_{2i}(u_i) u_i \Big)\,dx}{\|(u_1,...,u_n)\|}\\
				 & \geq \displaystyle \lim_{\|(u_1,...,u_n)\| \to + \infty} \frac{m_0 \displaystyle\sum_{i=1}^{n} \int_{\mathbb{R}^N} \Big(a_{1i}(| \nabla u_i|^{p_i(x)}) | \nabla u_i|^{p_i(x)} + w_i(x)a_{2i}(| u_i|^{p_i(x)}) |  u_i|^{p_i(x)} \Big)\,dx}{\|(u_1,...,u_n)\|}\\
				  & \geq \displaystyle \lim_{\|(u_1,...,u_n)\| \to+ \infty} \frac{m_0\displaystyle\min_{1\leq i\leq n}\Big\{\min\{k_{1i}^0,k_{2i}^0\}, \min\{k_{1i}^2,k_{2i}^2\}\Big\}\sum_{i=1}^{n} \Big( \| u_i\|_{p_i(x),w_i}^{p_i^+} + \mathcal{H}(k_{i}^3) \cdot \| u_i\|_{q_i(x),w_i}^{q_i^+} \Big) }{ \displaystyle\max_{1\leq i\leq n} \left\lbrace 	\|  u_i\|_{p_i(x),w_i}+ \mathcal{H}(\kappa_i^3)\| u_i\|_{q_i(x),w_i}\right\rbrace} =+\infty.
			 \end{aligned}$
		\end{center}
		
		Then, we deduce that $\Phi^\prime$ is coercive. Thus $\Phi^\prime$ is a surjection. Now, since $\Phi^\prime$ is semicontinuous in $X$, then by applying Minty-Browder theorem (Theorem 26.A of \cite{Zeidler}), we conclude that $\Phi^\prime$ admits a continuous inverse on $X^{\star}$. Moreover, the monotonicity of $\Phi^\prime$ on $X^{\star}$ ensures that $\Phi$ is sequentially lower semi-continuously on $X$ (see \cite{Zeidler}, Proposition 25. 20). The proof of the lemma is complete.
	\end{proof}
	
	\begin{lemma}\label{lem.3.2}
		Under assumptions $(\mathcal{F}_1)$ and $(\mathcal{F}_2)$, the functional $\Psi$ is well defined and is of class $C^1$ on $X$. Moreover, its derivative is given by
		
		\begin{center}
			$\displaystyle \displaystyle\langle\Psi^\prime(u), \varphi\displaystyle\rangle = \sum_{i=1}^n \frac{\partial F}{\partial u_i}(x,u) \varphi_i$, $\quad$ $\forall u=(u_1,\dots,u_n), \varphi=(\varphi_1,\dots,\varphi_n) \in X$.
		\end{center}
		
		Furthermore, $\Psi^\prime$ is compact from $X$ to $X^{\star}$.
	\end{lemma}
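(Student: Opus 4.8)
The plan is to deduce all four assertions — that $\Psi$ is finite on $X$, is of class $C^{1}$, has the stated derivative, and has a compact derivative — from the growth condition $(\mathcal{F}_2)$ combined with the generalized Hölder inequality (Proposition \ref{prop1}), the power rule for Luxemburg norms (Proposition \ref{prop3}), and the continuous and compact Sobolev embeddings (Propositions \ref{prop4} and \ref{prop5}); the choice of the exponents $\alpha_{i}$ and $\alpha_{ij}$ in $(\mathcal{F}_2)$ is precisely what makes these Hölder estimates close. First I would verify that $\Psi$ is well defined. Since $F(x,0,\dots,0)=0$ by $(\mathcal{F}_1)$, the mean value theorem gives, for a.e.\ $x$,
\[
F(x,u_{1},\dots,u_{n})=\int_{0}^{1}\sum_{i=1}^{n}\frac{\partial F}{\partial u_{i}}(x,tu_{1},\dots,tu_{n})\,u_{i}\,dt,
\]
so $(\mathcal{F}_2)$ yields the pointwise bound $|F(x,u)|\le\sum_{i,j=1}^{n}\frac{1}{\mu_{ij}}\,b_{ij}(x)\,|u_{j}(x)|^{\mu_{ij}-1}|u_{i}(x)|$. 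Integrating over $\mathbb{R}^{N}$, the diagonal terms $\int_{\mathbb{R}^{N}}b_{ii}|u_{i}|^{\mu_{ii}}\,dx$ are handled by Hölder's inequality with $b_{ii}\in L^{\alpha_{i}}(\mathbb{R}^{N})$ ($\alpha_{i}$ conjugate to $\gamma_{i}(x)$), together with Proposition \ref{prop3} and the embedding $X_{i}\hookrightarrow W^{1,\gamma_{i}(x)}_{w_{i}}(\mathbb{R}^{N})\hookrightarrow L^{r(x)}(\mathbb{R}^{N})$ for the relevant subcritical exponent $r$ (Proposition \ref{prop5}); the off-diagonal terms $\int_{\mathbb{R}^{N}}b_{ij}|u_{j}|^{\mu_{ij}-1}|u_{i}|\,dx$ are split by the three-factor Hölder inequality into $|b_{ij}|_{\alpha_{ij}(x)}$ times powers of $|u_{j}|_{\gamma_{j}^{\partial}(x)}$ and of $|u_{i}|_{\gamma_{i}^{\partial}(x)}$, the three exponents adding to $1$ by the definition of $\alpha_{ij}$, and the last two quantities are controlled by $\|u\|$ via Proposition \ref{prop4}. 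Hence $\Psi(u)\in\mathbb{R}$ for every $u\in X$.

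Next I would compute $\Psi'$. For fixed $u,\varphi\in X$ and $t\in[-1,1]$, the $t$-derivative of the integrand of $\Psi(u+t\varphi)$ equals $\sum_{i}\frac{\partial F}{\partial u_{i}}(x,u+t\varphi)\varphi_{i}$, which by $(\mathcal{F}_2)$ is majorized, uniformly in $t$, by $\sum_{i,j}b_{ij}(x)\big(|u_{j}|+|\varphi_{j}|\big)^{\mu_{ij}-1}|\varphi_{i}|$; the estimate of the previous paragraph, applied with $|u|+|\varphi|$ and $\varphi$ in place of $u$, shows this majorant lies in $L^{1}(\mathbb{R}^{N})$. Differentiating under the integral sign (dominated convergence) then gives the stated expression for $\langle\Psi'(u),\varphi\rangle$, and the same bound makes $\varphi\mapsto\langle\Psi'(u),\varphi\rangle$ linear and bounded, i.e.\ $\Psi'(u)\in X^{\star}$. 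For the continuity of $u\mapsto\Psi'(u)$: if $u^{(k)}\to u$ in $X$, then along a subsequence $u^{(k)}_{i}\to u_{i}$ in $L^{\gamma_{i}^{\partial}(x)}(\mathbb{R}^{N})$ and a.e., and the continuity of the Nemytskii operators $v\mapsto b_{ij}|v|^{\mu_{ij}-1}$ from the appropriate Lebesgue spaces into $L^{(\gamma_{i}^{\partial})'(x)}(\mathbb{R}^{N})$ — which uses $b_{ij}\in L^{\alpha_{ij}}$ and a Vitali/dominated-convergence argument — forces $\|\Psi'(u^{(k)})-\Psi'(u)\|_{X^{\star}}\to0$; a routine subsequence argument upgrades this to convergence of the whole sequence, so that $\Psi\in C^{1}(X,\mathbb{R})$.

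Finally, compactness. Let $u^{(k)}\rightharpoonup u$ in $X$; then $(u^{(k)})$ is bounded, so by the compact embeddings of Proposition \ref{prop5}, applied strictly below the critical exponents $\gamma_{i}^{\partial}$ (and below $q_{i}^{\partial}$ whenever $\mathcal{H}(k_{i}^{3})=1$), one has $u^{(k)}_{i}\to u_{i}$ strongly in the Lebesgue spaces $L^{r_{i}(x)}(\mathbb{R}^{N})$ that enter the Hölder estimates. Writing $\|\Psi'(u^{(k)})-\Psi'(u)\|_{X^{\star}}=\sup_{\|\varphi\|\le1}\big|\sum_{i}\int_{\mathbb{R}^{N}}\big(\tfrac{\partial F}{\partial u_{i}}(x,u^{(k)})-\tfrac{\partial F}{\partial u_{i}}(x,u)\big)\varphi_{i}\,dx\big|$, the growth bound $(\mathcal{F}_2)$ and the generalized Hölder inequality reduce the right-hand side to a finite sum of products in which one factor is the fixed weight norm $|b_{ij}|_{\alpha_{ij}(x)}$ (or $|b_{ii}|_{\alpha_{i}(x)}$), one factor is bounded uniformly in $k$ and in $\|\varphi\|\le1$ by the embeddings, and one factor tends to $0$ by the strong $L^{r_{i}(x)}$-convergence just obtained together with continuity of the relevant Nemytskii maps. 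Hence $\Psi'(u^{(k)})\to\Psi'(u)$ in $X^{\star}$, so $\Psi'$ sends weakly convergent sequences to strongly convergent ones and is therefore compact.

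I expect the compactness step to be the main obstacle. On the unbounded domain $\mathbb{R}^{N}$ one must both invoke the compact embeddings of Proposition \ref{prop5} strictly below the critical exponents and exploit that the coefficients $b_{ij}$ lie in $L^{\alpha_{ij}}(\mathbb{R}^{N})$ — which is what controls the tails near infinity — so that, after the Hölder splitting, every term in $\langle\Psi'(u^{(k)})-\Psi'(u),\varphi\rangle$ decomposes into a fixed weight factor, a uniformly bounded factor, and a vanishing factor, uniformly over $\|\varphi\|\le1$; keeping track of which subcritical exponents $r_{i}$ make all of $\mu_{ij},\alpha_{i},\alpha_{ij}$ and $\gamma_{i}^{\partial}$ fit together is the technical heart of the argument. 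Everything else — well-definedness, the derivative formula, and continuity of $\Psi'$ — is a routine application of the mean value theorem, dominated convergence, and the same Hölder–Sobolev estimate.
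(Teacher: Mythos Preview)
Your well-definedness argument and derivative formula match the paper's, but your treatment of the continuity and compactness of $\Psi'$ diverges in a way that leaves a genuine gap. In your third paragraph you invoke Proposition~\ref{prop5} to pass from weak convergence $u^{(k)}\rightharpoonup u$ in $X$ to \emph{strong} convergence $u^{(k)}_i\to u_i$ in $L^{r_i(x)}(\mathbb{R}^N)$, and then let the ``vanishing factor'' in your H\"older splitting come from this strong convergence. But a compact Sobolev embedding on all of $\mathbb{R}^N$ is not available here: the weights $w_i$ are merely bounded above and below, so $W^{1,\gamma_i(x)}_{w_i}(\mathbb{R}^N)$ is the ordinary variable-exponent Sobolev space, and the usual loss of compactness on unbounded domains applies. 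Although Proposition~\ref{prop5} is stated over $\mathbb{R}^N$, the cited sources deal with bounded domains, and the paper itself never uses the compactness part globally.

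The paper instead localizes: it writes $\mathbb{R}^N=B_R\cup B_R'$ and introduces $\Psi_R(u)=\int_{B_R}F(x,u)\,dx$. On $B_R$ the compact embedding (bounded domain) makes $\Psi_R'$ compact and $C^1$; on $B_R'$ the key fact $|b_{ij}|_{L^{\alpha_{ij}(x)}(B_R')}\to 0$ as $R\to\infty$ forces the tail contribution to $\langle\Psi'(u^{(k)})-\Psi'(u),\varphi\rangle$ to be uniformly small. This is exactly the ``tails controlled by $b_{ij}\in L^{\alpha_{ij}}$'' idea you flag in your final paragraph, but in your actual argument the $b_{ij}$-norm appears only as a fixed constant, not as the source of decay. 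The same $B_R/B_R'$ splitting is used by the paper for Fr\'echet differentiability and for continuity of $\Psi'$. Your sketch is repaired by carrying out this decomposition explicitly and letting the vanishing come from $|b_{ij}|_{L^{\alpha_{ij}(x)}(B_R')}\to 0$ rather than from a global compact embedding.
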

	
	\begin{proof}
	We start by showing that the functional $F$ is well defined on $X$. Indeed, for all $u=(u_1,\dots,u_n) \in X$, we have in virtue of $(\mathcal{F}_1)$ and $(\mathcal{F}_2)$,
		
		\begin{center}
			$\displaystyle \begin{aligned} F(x,u) & = \sum_{i=1}^n \int_{0}^{u_i} \frac{\partial F}{\partial s}(x, u_1, \dots, s, \dots, u_n) ds + F(x, 0, \dots, 0) dx \\ & \leq c_{1}\left[\sum_{i=1}^{n}\left(\sum_{j=1}^{n} b_{i j}(x)\left|u_{j}(x)\right|^{\mu_{i j}-1}\left|u_{i}(x)\right|\right)\right], \end{aligned}$
		\end{center}
		
		Then,
		
		\begin{center}
			$\displaystyle \int_{\mathbb{R}^{N}} F\left(x, u_{1}, \ldots, u_{n}\right) d x \leq c_{2}\left[\sum_{i=1}^{n}\left(\int_{\mathbb{R}^{N}} \sum_{j=1}^{n} b_{i j}(x)\left|u_{j}(x)\right|^{\mu_{i j}-1}\left|u_{i}(x)\right| d x\right)\right].$
		\end{center}
		
		If we consider the fact that $W^{1, \gamma(x)}\left(\mathbb{R}^{N}\right) \hookrightarrow L^{\mu(x)}\left(\mathbb{R}^{N}\right)$, for $\mu(x)>1$, then there exists $c>0$ such that
		
		\begin{center}
			$\displaystyle ||u|^{\mu}|_{\gamma(x)} = |u|_{\mu \gamma(x)}^{\mu} \leq c \| u \|_{\gamma(x)}^{\mu}$,
		\end{center}
		
		and if we apply Propositions \ref{prop1},\ref{prop2} and \ref{prop5} and take $b_{i i} \in L^{\alpha_{i}(x)}, b_{i j} \in L^{\alpha_{i j}(x)}$ if $i \neq j$, then we have
		
		\begin{align}
			\displaystyle  \int_{\mathbb{R}^{N}} F(x, u_{1}, \ldots, u_{n}) & \leq c_{3} \Big[\sum_{i=1}^{n} \Big(\sum_{j=1}^{n}|b_{i j}|_{\alpha_{i j}(x)}|| u_{j}|^{\mu_{i j}-1}|_{\gamma_{j}^{\partial}(x)}|u_{i}|_{\gamma_{i}^{\partial}(x)}\Big)\Big] \quad \quad (3.1) \\ & \leq c_{3} \Big[\sum_{i=1}^{n} \Big(\sum_{j=1}^{n}|b_{i j}|_{\alpha_{i j}(x)} |u_{j}|_{(\mu_{i j}-1) \gamma_{j}^{\partial}(x)}^{\mu_{i j}-1} |u_{i}|_{\gamma_{i}^{\partial}(x)}\Big)\Big] \quad  \quad (3.2) \\ 
				& \leq c_{3} \Big[\sum_{i=1}^{n} \Big(\sum_{j=1}^{n}|b_{i j}|_{\alpha_{i j}(x)}\|u_{j}\|_{\gamma_{j}(x)}^{\mu_{i j}-1}\|u_{i}\|_{\gamma_{i}(x)}\Big)\Big] < \infty.  \quad \label{(3.3)} \quad(3.3)
			\end{align}.
		\
		
		Hence, $\Psi$ is well defined. Moreover, one can easily see that $\Psi^{\prime}$ is also well defined on $X$. Indeed, using $(\mathcal{F}_2)$ for all $\varphi=\left(\varphi_{1}, \dots, \varphi_{n}\right) \in X$, we have
		
		\begin{center}
			$\displaystyle \begin{aligned} \Psi^{\prime}(u) \varphi &=\sum_{i=1}^{n} \int_{\mathbb{R}^{N}} \frac{\partial F}{\partial u_{i}}\left(x, u_{1}, \ldots, u_{n}\right) \varphi_{i} d x \\ & \leq \sum_{i=1}^{n}\left(\sum_{j=1}^{n} b_{i j}(x)\left|u_{j}(x)\right|^{\mu_{i j}-1}\right)\left|\varphi_{i}(x)\right| dx. \end{aligned}$
		\end{center}
		
		Following HÃ¶lder inequality, we obtain
		
		\begin{center}
			$\displaystyle \Psi^{\prime}(u) \varphi \leq c_{4} \Big[\sum_{i=1}^{n} \Big(\sum_{j=1}^{n}|b_{i j}|_{\alpha_{i j}(x)}|| u_{j}|^{\mu_{i j}-1}|_{\gamma_{j}^{\partial}(x)}|\varphi_{i}|_{\gamma_{i}^{\partial}(x)}\Big)\Big]$.
		\end{center}
		
		The above propositions yield
		
		\begin{center}
			$\displaystyle \Psi^{\prime}(u) \varphi \leq c\left[\sum_{i=1}^{n}\left(\sum_{j=1}^{n}\left|b_{i j}\right|_{\beta_{i j}(x)}\left\|u_{j}\right\|_{\gamma_{j}(x)}^{\mu_{i j}-1}\left\|\varphi_{i}\right\|_{\gamma_{i}(x)}\right)\right]<\infty$.
		\end{center}
		
		Now let us show that $\Psi$ is differentiable in the sense of Frechet, that is, for fixed $u=\left(u_{1}, \ldots, u_{n}\right) \in X$ and given $\varepsilon>0$, there must be a $\delta=\delta_{\varepsilon, u_{1}, \ldots, u_{n}}>0$ such that
		
		\begin{center}
			$\displaystyle \left|\Psi\left(u_{1}+\varphi_{1}, \ldots, u_{n}+\varphi_{n}\right)-\Psi\left(u_{1}, \ldots, u_{n}\right)-\Psi^{\prime}\left(u_{1}, \ldots, u_{n}\right)\left(\varphi_{1}, \ldots, \varphi_{n}\right)\right| \leq \varepsilon \sum_{i=1}^{n}\left(\left\|\varphi_{i}\right\|_{\gamma_{i}(x)}\right),$
		\end{center}
		
		for all $\varphi=\left(\varphi_{1}, \ldots, \varphi_{n}\right) \in X$ with $\displaystyle \sum_{i=1}^{n}\left(\left\|\varphi_{i}\right\|_{\gamma_{i}(x)}\right) \leq \delta$.
		
		Let $B_{R} = \Big\{ x \in \mathbb{R}^N: |x| < R \Big\}$ be the ball of radius $R$ which is centered at the origin of $\mathbb{R}^{N}$ and denote $B_{R}^{\prime}=\mathbb{R}^{N} \backslash B_{R}$. Moreover, let us define the functional $\Psi_{R}: \displaystyle \prod_{i=1}^{n} \Big(W_{w_{i}}^{1, p_{i}(x)}\left(B_{R}\right)\cap W_{w_{i}}^{1, \gamma_{i}(x)}\left(B_{R}\right)\Big) \longrightarrow \mathbb{R}$ as follows:
		
		\begin{center}
			$\displaystyle \Psi_{R}(u)=\int_{B_{R}} F\left(x, u_{1}(x), \ldots, u_{n}(x)\right) dx, \quad \forall u=(u_1,\dots,u_n)$.
		\end{center}
		
		If we consider $(\mathcal{F}_1)$ and $(\mathcal{F}_2)$, it is easy to see that $\displaystyle \Psi_{R} \in C^{1}\Big(\prod_{i=1}^{n}  W_{w_{i}}^{1, p_{i}(x)}\left(B_{R}\right)\cap W_{w_{i}}^{1, \gamma_{i}(x)}\left(B_{R}\right)\Big)$, and in addition for all $\displaystyle \varphi=\left(\varphi_{1}, \ldots, \varphi_{n}\right) \in \prod_{i=1}^{n}  \Big(W_{w_{i}}^{1, p_{i}(x)}\left(B_{R}\right)\cap W_{w_{i}}^{1, \gamma_{i}(x)}\left(B_{R}\right)\Big)$, we have
		
		\begin{center}
			$\displaystyle \Psi_{R}^{\prime}(u) \varphi=\sum_{i=1}^{n} \int_{B_{R}} \frac{\partial F}{\partial u_{i}}\left(x, u_{1}(x), \ldots, u_{n}(x)\right) \varphi_{i}(x) dx$.
		\end{center}
		
		Also as we know, the operator $\displaystyle \Psi_{R}^{\prime}: \prod_{i=1}^{n}  \Big(W_{w_{i}}^{1, p_{i}(x)}\left(B_{R}\right)\cap W_{w_{i}}^{1, \gamma_{i}(x)}\left(B_{R}\right)\Big) \longrightarrow \Big(\prod_{i=1}^{n}  \Big(W_{w_{i}}^{1, p_{i}(x)}\left(B_{R}\right)\cap W_{w_{i}}^{1, \gamma_{i}(x)}\left(B_{R}\right)\Big)\Big)^{\star}$ is compact \cite{Fan3}. Then, for all $u=\left(u_{1}, \ldots, u_{n}\right), \varphi=\left(\varphi_{1}, \ldots, \varphi_{n}\right) \in X$, we can write
		
		\begin{center}
			$\displaystyle \begin{aligned} & \left|\Psi\left(u_{1}+\varphi_{1}, \ldots, u_{n}+\varphi_{n}\right)-\Psi\left(u_{1}, \ldots, u_{n}\right)-\Psi^{\prime}\left(u_{1}, \ldots, u_{n}\right)\left(\varphi_{1}, \ldots, \varphi_{n}\right)\right| \\
		 & \leq\left|\Psi_{R}\left(u_{1}+\varphi_{1}, \ldots, u_{n}+\varphi_{n}\right)-\Psi_{R}\left(u_{1}, \ldots, u_{n}\right)-\Psi_{R}^{\prime}\left(u_{1}, \ldots, u_{n}\right)\left(\varphi_{1}, \ldots, \varphi_{n}\right)\right| \\ 
		&\quad + \Big| \int_{B_{R}^{\prime}}\left(F\left(x, u_{1}+\varphi_{1}, \ldots, u_{n}+\varphi_{n}\right)-F\left(x, u_{1}, \ldots, u_{n}\right)-\right. \left.\sum_{i=1}^{n} \int_{B_{R}} \frac{\partial F}{\partial u_{i}}\left(x, u_{1}, \ldots, u_{n}\right) \varphi_{i}\right) d x \Big| .
	\end{aligned}$
		\end{center}
		
		By virtue of mean-value theorem, there exist $\left.\xi_{1}, \ldots, \xi_{n} \in \right ]0,1[$ such that
		
		\begin{center}
			$\displaystyle \begin{gathered} \left| \int_{B_{R}^{\prime}}\left(F\left(x, u_{1}+\varphi_{1}, \ldots, u_{n}+\varphi_{n}\right)-F\left(x, u_{1}, \ldots, u_{n}\right)\right) - \sum_{i=1}^{n} \int_{B_{R}} \frac{\partial F}{\partial u_{i}}\left(x, u_{1}, \ldots, u_{n}\right) \varphi_{i} dx \right| \\ = \left|\int_{B_{R}^{\prime}}\left(\sum_{i=1}^{n} \frac{\partial F}{\partial u_{i}}\left(x, u_{1}, \ldots, u_{i}+\xi_{i} \varphi_{i}, \ldots, u_{n}\right) \varphi_{i}-\sum_{i=1}^{n} \frac{\partial F}{\partial u_{i}}\left(x, u_{1}, \ldots, u_{n}\right)\right) dx \right| .\end{gathered}$
		\end{center}
		
		Using the condition $(\mathcal{F}_2)$, we have
		
		\begin{center}
			$\displaystyle \left|\int_{B_{R}^{\prime}}\left(F\left(x, u_{1}+\varphi_{1}, \ldots, u_{n}+\varphi_{n}\right)-F\left(x, u_{1}, \ldots, u_{n}\right)\right)-\sum_{i=1}^{n} \int_{B_{R}} \frac{\partial F}{\partial u_{i}}\left(x, u_{1}, \ldots, u_{n}\right) \varphi_{i} d x\right| \leq\left|\sum_{i=1}^{n}\left(\sum_{j=1}^{n} \int_{B_{R}^{\prime}} b_{i j}(x)\left(\left|u_{j}+\xi_{j} \varphi_{j}\right|^{\mu_{i j}-1}-\left|u_{j}\right|^{\mu_{i j}-1}\right) \varphi_{i} d x\right)\right|.$
		\end{center}
		
		Using the elementary inequality $|a+b|^{s} \leq 2^{s-1}\left(|a|^{s}+|b|^{s}\right)$ for $a, b \in \mathbb{R}^{N}$, we can write
		
		\begin{center}
			$\displaystyle \begin{gathered} \leq \sum_{i=1}^{n}\left(\sum _ { j = 1 } ^ { n } \left(\left(2^{\mu_{i j}-1}-1\right) \int_{B_{R}^{\prime}} b_{i j}(x)\left|u_{j}\right|^{\mu_{i j}-1}\left|\varphi_{i}\right| dx \right.\left.+\left(\xi_{j} 2\right)^{\mu_{i j}-1} \int_{B_{R}^{\prime}} b_{i j}(x)\left|\varphi_{j}\right|^{\mu_{i j}-1}\left|\varphi_{i}\right| d x\right)\right) \end{gathered}$.
		\end{center}
		
		Then, applying Propositions \ref{prop2}, \ref{prop3} and \ref{prop5}, we have
		
		\begin{center}
			$\displaystyle \leq \sum_{i=1}^{n} c\left(\sum_{j=1}^{n}\left(\left|b_{i j}(x)\right|_{\alpha_{i j}}\left\|u_{j}\right\|_{\gamma_{1}^{\partial}(x)}^{\mu_{i j}-1}+\left|b_{i j}(x)\right|_{\alpha_{i j}}\left\|\varphi_{j}\right\|_{\gamma_{j}^{\partial}(x)}^{\mu_{i j}-1}\right)\right)\left\|\varphi_{i}\right\|_{\gamma_{i}(x)}$
		\end{center}
		
		and by the fact that
		
		\begin{center}
			$\displaystyle \left|b_{i i}(x)\right|_{L^{\alpha_{i}}\left(B_{R}^{\prime}\right)} \longrightarrow 0$,
		\end{center}
		
		and
		
		\begin{center}
			$\displaystyle \left|b_{i j}(x)\right|_{L^{\alpha_{i}}\left(B_{R}^{\prime}\right)} \longrightarrow 0$,
		\end{center}
		
		for all $1 \leq i, j \leq n$, as $R \rightarrow \infty$, and for $R$ sufficiently large, we obtain the estimate
		
		\begin{center}
			$\displaystyle \Big| \int_{B_{R}^{\prime}}\left(F\left(x, u_{1} + \varphi_{1}, \ldots, u_{n} + \varphi_{n}\right) - F\left(x, u_{1}, \ldots, u_{n}\right) - \sum_{i=1}^{n} \frac{\partial F}{\partial u_{i}}\left(x, u_{1}, \ldots, u_{n}\right) \varphi_{i}\right) dx \Big| \leq \varepsilon \sum_{i=1}^{n}\left(\left\|\varphi_{i}\right\|_{\gamma_{i}(x)}\right) $.
		\end{center}
		
	It only remains to show that $\Psi^{\prime}$ is continuous on $X$. Let $u_{m}=\left(u_{m,1}, \ldots, u_{m,n}\right)$ be such that $u_{m} \rightarrow u$ as $m \rightarrow \infty$. Then, for $\varphi = \left(\varphi_{1}, \ldots, \varphi_{n}\right) \in X$, we have
		
		\begin{center}
			$\displaystyle \begin{aligned} \left|\Psi^{\prime}\left(u_{m}\right) \varphi - \Psi^{\prime}(u) \varphi \right| & \leq \left|\Psi_{R}^{\prime}\left(u_{m}\right) \varphi -\Psi_{R}^{\prime}(u) \varphi \right| \\ & + \sum_{i=1}^{n} \int_{B_{R}^{\prime}}\left|\left(\frac{\partial F}{\partial u_{i}}\left(x, u_{m,1}, \ldots, u_{m,n}\right) \varphi_{i}-\frac{\partial F}{\partial u_{i}}\left(x, u_{1}, \ldots, u_{n}\right) \varphi_{i}\right) dx \right| \end{aligned}$
		\end{center}
		
		Since $\Psi_{R}^{\prime}$ is continuous on $\displaystyle \prod_{i=1}^{n} \Big(W_{w_{i}}^{1, p_{i}(x)}\left(B_{R}\right)\cap W_{w_{i}}^{1, \gamma_{i}(x)}\left(B_{R}\right)\Big)$ (see \cite{Fan3}), we have
		
		\begin{center}
			$\displaystyle \left|\Psi_{R}^{\prime}\left(u_{m}\right) \varphi -\Psi_{R}^{\prime}(u) \varphi\right| \longrightarrow 0\quad \text{ as } m \rightarrow \infty$.
		\end{center}
		
		Now, using $(\mathcal{F}_2)$ once again and taking into account that the other terms on the right-hand side of the above inequality tend to zero, we conclude that $\Psi^{\prime}$ is continuous on $X$. \\
		
		As for the compactness of $\Psi^{\prime}$, let $u_{m}=\left(u_{m,1}, \ldots, u_{m,n}\right)$ be a bounded sequence in $X$. Then, there exists a subsequence (we also denote it as $u_{m}=\left(u_{m,1}, \ldots, u_{m,n}\right)$) which converges weakly in $X$ to $u=\left(u_{1}, \ldots, u_{n}\right) \in X$. Then, if we use the same arguments as above, we have
		
		\begin{center}
			$\displaystyle \begin{aligned} \left|\Psi^{\prime}\left(u_{m}\right) \varphi -\Psi^{\prime}(u) \varphi\right| & \leq \left|\Psi_{R}^{\prime}\left(u_{m}\right) \varphi -\Psi_{R}^{\prime}(u) \varphi \right| \\ & + \sum_{i=1}^{n} \int_{B_{R}^{\prime}}\left|\left(\frac{\partial F}{\partial u_{i}}\left(x, u_{m,1}, \ldots, u_{m,n} \right) \varphi_{i}-\frac{\partial F}{\partial u_{i}}\left(x, u_{1}, \ldots, u_{n}\right) \varphi_{i}\right) d x\right|. \end{aligned}$
		\end{center}
		
		Since the restriction operator is continuous, we have $u_{m} \rightharpoonup u$ in  $\displaystyle \prod_{i=1}^{n} \Big(W_{w_{i}}^{1, p_{i}(x)}\left(B_{R}\right)\cap W_{w_{i}}^{1, \gamma_{i}(x)}\left(B_{R}\right)\Big)$. Because of the compactness of $\Psi^{\prime}$, the first expression on the right-hand side of the inequality tends to 0 , as $m \longrightarrow \infty$, and, as above, for sufficiently large $R$ we obtain
		
		\begin{center}
			$\displaystyle \sum_{i=1}^{n} \int_{B_{R}^{\prime}}\left|\left(\frac{\partial F}{\partial u_{i}}\left(x,u_{m,1}, \ldots, u_{m,n}\right) \varphi_{i}-\frac{\partial F}{\partial u_{i}}\left(x, u_{1}, \ldots, u_{n}\right) \varphi_{i}\right) \right| dx \longrightarrow 0.$
		\end{center}
		
		This implies $\Psi^{\prime}$ is compact from $X$ to $X^{\star}$. 
	\end{proof}
	
	\begin{theorem}\label{Th3.3}
		Under assumptions $(\mathcal{F}_1)-(\mathcal{F}_3)$, the system \eqref{s1.1} admits at least three distinct weak solutions in $X$ for each $\lambda$ that belongs to the interval
		
		\begin{center}
			$\displaystyle \Lambda_{r}= \Bigg] \frac{\displaystyle m_1 \sum_{i=1}^n \max \left\lbrace \| z_i\|_{p_i(x),w_i}^{p_i^-} ,\| z_i\|_{p_i(x),w_i}^{p_i^+} \right\rbrace + \mathcal{H}(k_i^3) \max \left\lbrace \| z_i\|_{q_i(x),w_i}^{q_i^-} ,\| z_i\|_{q_i(x),w_i}^{q_i^+} \right\rbrace}{\displaystyle \int_{\mathbb{R}^N} F(x, z_1, \dots, z_n) dx} , \frac{r}{\displaystyle \int_{\mathbb{R}^N} \sup_{(\xi_1, \dots, \xi_n) \in K(\frac{sr}{m_0^{\star}})} F(x, \xi_1, \dots, \xi_n) dx} \Bigg[.$
		\end{center}
	\end{theorem}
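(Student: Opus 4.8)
The plan is to apply Lemma \ref{lem1.1} with the functionals $\Phi,\Psi$ of Section \ref{sec2} and with $r>0$, $\bar x:=z=(z_1,\dots,z_n)\in X$ taken as in hypothesis $(\mathcal{F}_3)$. The abstract structural requirements are already established: Lemma \ref{lem.3.1} gives that $\Phi$ is coercive, continuously G\^{a}teaux differentiable and sequentially weakly lower semicontinuous, with $\Phi'$ admitting a continuous inverse on $X^{\star}$; Lemma \ref{lem.3.2} gives that $\Psi$ is continuously G\^{a}teaux differentiable with compact derivative $\Psi'$; and $\Phi(0)=\Psi(0)=0$ because $\widehat{M}_i(0)=0$, $\mathcal{B}_i(0)=0$, and $F(x,0,\dots,0)=0$ by $(\mathcal{F}_1)$. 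It remains to verify $r<\Phi(z)$, the conditions $(\mathrm{a}_1)$ and $(\mathrm{a}_2)$, and that $\Lambda_r$ in the statement is contained in the admissible interval $\big]\,\Phi(z)/\Psi(z),\ r/\sup_{\Phi(u)\le r}\Psi(u)\,\big[$ of Lemma \ref{lem1.1}; once this is done, the critical points of $E_\lambda=\Phi-\lambda\Psi$ produced by the lemma are exactly the weak solutions of \eqref{s1.1}.

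\textbf{Two-sided bounds for $\Phi$ and the condition $r<\Phi(z)$.} Repeating the chain of inequalities in the proof of Lemma \ref{lem.3.1} — based on $(\textbf{\textit{H}}_2)$, $(\textbf{\textit{H}}_4)$, $(\textbf{\textit{M}})$, the modular relations \eqref{inq1}--\eqref{inq2}, and the definition of $m_0^{\star}$ in \eqref{defs} — one gets, for all $u\in X$,
$$
\Phi(u)\ \ge\ \sum_{i=1}^{n}\frac{m_0^{\star}}{p_i^{+}}\Big(\min\{\|u_i\|_{p_i(x),w_i}^{p_i^-},\|u_i\|_{p_i(x),w_i}^{p_i^+}\}+\mathcal{H}(k_i^3)\min\{\|u_i\|_{q_i(x),w_i}^{q_i^-},\|u_i\|_{q_i(x),w_i}^{q_i^+}\}\Big);
$$
dually, from the upper bound $a_{ji}(\tau)\le k_{ji}^1+k_i^3\tau^{(q_i(x)-p_i(x))/p_i(x)}$ in $(\textbf{\textit{H}}_2)$ and $M_i(t)\le m_1$ one gets
$$
\Phi(z)\ \le\ m_1\sum_{i=1}^{n}\Big(\max\{\|z_i\|_{p_i(x),w_i}^{p_i^-},\|z_i\|_{p_i(x),w_i}^{p_i^+}\}+\mathcal{H}(k_i^3)\max\{\|z_i\|_{q_i(x),w_i}^{q_i^-},\|z_i\|_{q_i(x),w_i}^{q_i^+}\}\Big).
$$
Evaluating the first bound at $u=z$ and comparing with $(C_1)$ yields $r<\Phi(z)$.

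\textbf{Condition $(\mathrm{a}_1)$ and the interval inclusion.} The decisive step is to show that $\Phi(u)\le r$ implies $u\in K(sr/m_0^{\star})$ in the sense of \eqref{defK}. Indeed, if $\Phi(u)\le r$ then the lower bound above controls $\|u_i\|_{p_i(x),w_i}$ and, when $\mathcal{H}(k_i^3)=1$, also $\|u_i\|_{q_i(x),w_i}$; inserting these into the embedding estimates of Proposition \ref{prop4}, namely $|u_i|_{p_i^{\partial}(x)}\le c_{p_i(x)}\|u_i\|_{p_i(x),w_i}$ and $|u_i|_{q_i^{\partial}(x)}\le c_{q_i(x)}\|u_i\|_{q_i(x),w_i}$, and converting between moduli and norms via Proposition \ref{prop2}, one arrives at
$$
\sum_{i=1}^{n}\Big(\min\{|u_i|_{p_i^{\partial}(x)}^{(p_i^{\partial})^-},|u_i|_{p_i^{\partial}(x)}^{(p_i^{\partial})^+}\}+\mathcal{H}(k_i^3)\min\{|u_i|_{q_i^{\partial}(x)}^{(q_i^{\partial})^-},|u_i|_{q_i^{\partial}(x)}^{(q_i^{\partial})^+}\}\Big)\ \le\ \frac{sr}{m_0^{\star}},
$$
the constant $s$ of \eqref{defs} being designed precisely to absorb the factors $p_i^{+}$ and $c_{p_i(x)},c_{q_i(x)}$. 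Therefore $\sup_{\Phi(u)\le r}\Psi(u)\le\int_{\mathbb{R}^N}\sup_{(\xi_1,\dots,\xi_n)\in K(sr/m_0^{\star})}F(x,\xi_1,\dots,\xi_n)\,dx$. Combining this with $\Psi(z)/\Phi(z)\ge\big(\int_{\mathbb{R}^N}F(x,z)\,dx\big)\big/\big(m_1\sum_{i=1}^{n}(\max\{\cdots\}+\mathcal{H}(k_i^3)\max\{\cdots\})\big)$ (from the previous step) and with $(C_2)$, one obtains
$$
\frac{\sup_{\Phi(u)\le r}\Psi(u)}{r}\ \le\ \frac{1}{r}\int_{\mathbb{R}^N}\sup_{K(sr/m_0^{\star})}F\,dx\ <\ \frac{\int_{\mathbb{R}^N}F(x,z)\,dx}{m_1\sum_{i=1}^{n}\big(\max\{\cdots\}+\mathcal{H}(k_i^3)\max\{\cdots\}\big)}\ \le\ \frac{\Psi(z)}{\Phi(z)},
$$
which is exactly $(\mathrm{a}_1)$; moreover the same chain shows that the stated $\Lambda_r$ is contained in $\big]\Phi(z)/\Psi(z),\,r/\sup_{\Phi(u)\le r}\Psi(u)\big[$, while $(C_2)$ guarantees $\Lambda_r\neq\emptyset$ (and in particular $\Psi(z)>0$).

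\textbf{Condition $(\mathrm{a}_2)$ and conclusion.} For the coercivity of $\Phi-\lambda\Psi$ I would combine the coercivity of $\Phi$ with the subcriticality built into $(\mathcal{F}_2)$: arguing as in the well-definedness part of Lemma \ref{lem.3.2} together with Young's inequality, $\Psi$ is dominated by a finite sum of powers of the norms $\|u_i\|_{\gamma_i(x)}$ with exponents strictly below the growth exponents of $\Phi$ (here $\gamma_i$ equals $p_i$ or $q_i$ according to $\mathcal{H}(k_i^3)$, and $1<\mu_{ij}<\gamma_i^-$), whence $\Phi(u)-\lambda\Psi(u)\to+\infty$ as $\|u\|\to+\infty$ for every $\lambda\ge0$, so $(\mathrm{a}_2)$ holds. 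All the hypotheses of Lemma \ref{lem1.1} being met, for each $\lambda\in\Lambda_r$ the functional $E_\lambda=\Phi-\lambda\Psi$ has at least three distinct critical points in $X$, i.e. the system \eqref{s1.1} admits at least three distinct weak solutions. The main obstacle is the bookkeeping in the verification of $(\mathrm{a}_1)$ — tracking the constants $m_0^{\star}$ and $s$, the switches $\mathcal{H}(k_i^3)$, and the various $\min/\max$ over $p_i^{\pm},q_i^{\pm},(p_i^{\partial})^{\pm},(q_i^{\partial})^{\pm}$ while correctly invoking Propositions \ref{prop2} and \ref{prop4}; the rest is routine.
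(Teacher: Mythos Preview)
Your proposal is correct and follows essentially the same route as the paper's own proof: verify the structural hypotheses via Lemmas \ref{lem.3.1}--\ref{lem.3.2}, use the lower bound for $\Phi$ together with $(C_1)$ to get $r<\Phi(z)$, show $\Phi^{-1}(\,]-\infty,r])\subseteq K(sr/m_0^{\star})$ through Proposition \ref{prop4} and the definition of $s$, combine this with the upper bound $\Phi(z)\le m_1\sum_i(\max\{\cdots\}+\mathcal{H}(k_i^3)\max\{\cdots\})$ and $(C_2)$ to obtain $(\mathrm{a}_1)$, and finally establish $(\mathrm{a}_2)$ by the coercivity argument based on $(\mathcal{F}_2)$ and Young's inequality. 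You make explicit a couple of steps the paper leaves implicit (the upper bound for $\Phi(z)$ and the inclusion of the stated $\Lambda_r$ in the abstract interval), but the substance is the same.
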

	
	\begin{proof}
		By Lemma \ref{lem.3.1}, $\Phi$ is coercive, and by definitions of $\Phi$ and $\Psi$ and from hypothesis $(\mathcal{F}_1)$, we have $\Phi(0,\dots,0) = \Psi(0, \dots, 0) = 0$. Moreover, the required hypothesis $\Phi(\bar x) > r$ follows from $(C_1)$ and the definition of $\Phi$ by choosing $\bar x = (z_1, \dots, z_n)$.	On the contrary, by applying Proposition \ref{prop4} for $u=(u_1, \dots, u_n) \in X$, we have
		
			\begin{multline}\label{inq32}
				\displaystyle \frac{1}{s} \Bigg( \sum_{i=1}^n   \Big( \min\Big\{|u_i|_{p_i^{\partial}}^{p_i^-}, |u_i|_{p_i^{\partial}}^{p_i^+}\Big\} + \mathcal{H}(\kappa_i^3) \cdot \min\Big\{ |u_i|_{q_i^{\partial}}^{q_i^-}, |u_i|_{q_i^{\partial}}^{q_i^+} \Big\} \Big) \Bigg) \\ \leq \sum_{i=1}^n \frac{1}{p_i^+}  \Big(\min\Big\{\| u_i\|_{p_i(x),w_i}^{p_i^-} , \| u_i\|_{p_i(x),w_i}^{p_i^+}\Big\}  +\mathcal{H}(k_{i}^3) \cdot \min\Big\{\| u_i\|_{q_i(x),w_i}^{q_i^-}, \| u_i\|_{q_i(x),w_i}^{q_i^+}\Big\} \Big) .
				\end{multline}
		
		with $\displaystyle s = \max_{1 \leq i \leq n}\Big\{p_i^+ \displaystyle \min \{c_{p_i(x)}^{p_i^{-}}, c_{p_i(x)}^{p_i^{+}} \}, p_i^+ \displaystyle \min \{c_{q_i(x)}^{q_i^{-}}, c_{q_i(x)}^{q_i^{+}} \}\Big \} $, defined in $\eqref{defs}$. \\
		
		Now, from  \eqref{inq32}, we obtain for $r > 0$
		
		\begin{center}
			$\displaystyle \begin{aligned} \Phi^{-1}(]-\infty,r]) & = \Big\{u = (u_1, \dots, u_n) \in X: \Phi(u_1, \dots, u_n) \leq r \Big\} \\ 
				& \subseteq \Big\{u = (u_1, \dots, u_n) \in X: \displaystyle \sum_{i=1}^n  \frac{m_i^0\min\{\displaystyle \min\{k_{1i}^0,k_{2i}^0\}, \displaystyle \min \{k_{1i}^2,k_{2i}^2\}\}}{p_i^+\displaystyle \max\{\beta_{1i}, \beta_{2i}\}} \\ &
				\quad \cdot \Big(\min\{\| u_i\|_{p_i(x),w_i}^{p_i^-} , \| u_i\|_{p_i(x),w_i}^{p_i^+}\} + \mathcal{H}(k_{i}^3) \cdot \min\{\| u_i\|_{q_i(x),w_i}^{q_i^-}, \| u_i\|_{q_i(x),w_i}^{q_i^+}\} \Big) \leq r \Big\} \\ 
				& \subseteq \displaystyle\Big\{\frac{\displaystyle m_0 \min_{1 \leq i \leq n}\{\kappa_i\} }{\displaystyle\max_{1 \leq i \leq n}\Big\{p_i^+ \displaystyle \min \{c_{p_i(x)}^{p_i^{-}}, c_{p_i(x)}^{p_i^{+}} \}, p_i^+ \displaystyle \min \{c_{q_i(x)}^{q_i^{-}}, c_{q_i(x)}^{q_i^{+}} \}\Big \}} \sum_{i=1}^{n} \Big( \min\{ |\xi_i|_{p_i^{\partial}(x)}^{{p_i^{-}}}, |\xi_i|_{p_i^{\partial}(x)}^{{p_i^{+}}} \} \\ 
				& + \mathcal{H}(k_i^3) \min \{ |\xi_i|_{q_i^{\partial}(x)}^{{q_i^{-}}}, |\xi_i|_{q_i^{\partial}(x)}^{{q_i^{+}}} \} \Big) \leq r \Big\} \\
				 & = \displaystyle K\Big(\frac{sr}{m_0^{\star}}\Big) \end{aligned}$
		\end{center}
		
		where $K(t)$ is defined in $\eqref{defK}$, and $\kappa_i$ and $m_0^{\star}$ are defined as follows
		
		\begin{center}
			$\displaystyle \begin{aligned} \kappa_i & = \displaystyle \frac{\min\{\displaystyle \min\{k_{1i}^0,k_{2i}^0\}, \displaystyle \min \{k_{1i}^2,k_{2i}^2\}\}}{\displaystyle \max\{\beta_{1i}, \beta_{2i}\}},\\
				 m_0^{\star} & = \displaystyle m_0 \min_{1 \leq i \leq n}\{\kappa_i\} .\end{aligned}$
		\end{center}
		
Then, we get  
		
		$\displaystyle \begin{aligned} \sup_{(u_1, \dots, u_n) \in \Phi^{-1}(]-\infty, r])} \Psi (u_1, \dots, u_n) & = \sup_{(\xi_1, \dots, \xi_n) \in K(\frac{sr}{m_0^{\star}})} \int_{\mathbb{R}^N} F(x, u_1, \dots, u_n) dx \\ & \leq \int_{\mathbb{R}^N} \sup_{(\xi_1, \dots, \xi_n) \in K(\frac{sr}{m_0^{\star}})} F(x, \xi_1, \dots, \xi_n) dx .\end{aligned}$ \\
		
		Therefore from $(C_2)$, we have
		
		$\displaystyle \begin{aligned} \sup_{(u_1, \dots, u_n) \in \Phi^{-1}(]-\infty, r])} \Psi (u_1, \dots, u_n) & \leq r \frac{\displaystyle \int_{\mathbb{R}^N} F(x, z_1, \dots, z_n) dx}{\displaystyle m_1 \sum_{i=1}^n \Big(\max \left\lbrace \| z_i\|_{p_i(x),w_i}^{p_i^-} ,\| z_i\|_{p_i(x),w_i}^{p_i^+} \right\rbrace + \mathcal{H}(k_i^3) \max \left\lbrace \| z_i\|_{q_i(x),w_i}^{q_i^-} ,\| z_i\|_{q_i(x),w_i}^{q_i^+} \right\rbrace\Big) }
			\\ & \leq r\frac{\Psi(z_1, \dots, z_n)}{\Phi(z_1, \dots, z_n)}, \end{aligned}$ 
		
		 from which condition $(a_1)$ of Lemma \ref{lem1.1} follows. 
		 
		 In the next step, we shall prove that for each $\lambda>0$, the energy functional $E_\lambda = \Phi - \lambda \Psi$ is coercive. For all $u=(u_1, \dots, u_n) \in X$, under the assumptions $(\textbf{H}_2)$ and $(\textbf{H}_3)$ and by using inequality \eqref{(3.3)}, we have 
		
		\begin{center}
			$\displaystyle \begin{aligned} E_\lambda (u) & = \sum_{i=1}^n \widehat{M}_i\Big(\mathcal{B}_i(u_i)\Big) - \lambda \int_{\mathbb{R}^N} F(x, u_1, \dots, u_n) dx \\ 
				& \geq \sum_{i=1}^n \frac{m_0}{p_i^+} \int_{\mathbb R^N} \Big(\mathcal{A}_{1i}(|\nabla u_i|^{p_i(x)}) + w_i(x)\mathcal{A}_{2i}(|u_i|^{p_i(x)}) \Big) dx \\ 
				&\quad - \lambda \int_{\mathbb{R}^N} \Big( \sum_{i=1}^n \int_0^{u_i} \frac{\partial F}{\partial s} (x, u_1, \dots, s, \dots, u_n) ds + F(x, 0, \dots, 0) dx \Big) \\
				 & \geq \sum_{i=1}^n \frac{m_0}{p_i^+} \int_{\mathbb R^N} \Big( \frac{1}{\beta_{1i}} a_{1i}\big( |\nabla u_i|^{p_i(x)} \big) |\nabla u_i|^{p_i(x)} + \frac{1}{\beta_{2i}}w_i(x) a_{2i}\big( |u_i|^{p_i(x)} \big) |u_i|^{p_i(x)} \Big) dx \\
				  & \quad- \lambda c_{3} \Big[\sum_{i=1}^{n} \Big(\sum_{j=1}^{n}|b_{i j}|_{\alpha_{i j}(x)}\|u_{j}\|_{\gamma_{j}(x)}^{\mu_{i j}-1}\|u_{i}\|_{\gamma_{i}(x)}\Big)\Big] \\ 
				  & \geq \sum_{i=1}^n  \frac{m_0}{p_i^+\max\{\beta_{1i}, \beta_{2i}\}} \int_{\mathbb R^N} \Big(a_{1i}\big( |\nabla u_i|^{p_i(x)} \big) |\nabla u_i|^{p_i(x)} + w_i(x) a_{2i}\big(|u_i|^{p_i(x)} \big) |u_i|^{p_i(x)} \Big) dx \\ 
				  & \quad - \lambda c_{3} \Big[\sum_{i=1}^{n} \Big(\sum_{j=1}^{n}|b_{i j}|_{\alpha_{i j}(x)}\|u_{j}\|_{\gamma_{j}(x)}^{\mu_{i j}-1}\|u_{i}\|_{\gamma_{i}(x)}\Big)\Big] \\ 
				  & \geq \sum_{i=1}^n \frac{m_0\min\big\{ \min \{k_{1i}^0 , k_{2i}^0\} , \min \{k_{1i}^2 , k_{2i}^2\} \big\}}{p_i^+\max\{\beta_{1i}, \beta_{2i}\}}  \int_{\mathbb R^N}\Big[ \Big(| \nabla u_i|^{p_i(x)}+w_i(x) |u_i|^{p_i(x)} \Big)  \\ 
				  & \quad   + \mathcal{H}(k_{i}^3) \Big(| \nabla u_i|^{q_i(x)}  + w_i(x) |u_i|^{q_i(x)} \Big)\Big]dx  - \lambda c_{3} \Big[\sum_{i=1}^{n} \Big(\sum_{j=1}^{n}|b_{i j}|_{\alpha_{i j}(x)}\|u_{j}\|_{\gamma_{j}(x)}^{\mu_{i j}-1}\|u_{i}\|_{\gamma_{i}(x)}\Big)\Big] \\ 
				& \geq \sum_{i=1}^n \frac{m_0 \kappa_i}{p_i^+} \cdot \Big(\rho_{p_i(x)w_i}(u_i) + \rho_{q_i(x)w_i}(u_i)\Big) - \lambda c_{3} \Big[\sum_{i=1}^{n} \Big(\sum_{j=1}^{n}|b_{i j}|_{\alpha_{i j}(x)}\|u_{j}\|_{\gamma_{j}(x)}^{\mu_{i j}-1}\|u_{i}\|_{\gamma_{i}(x)}\Big)\Big] \end{aligned}$
		\end{center}
		
		By using Young's inequality and \eqref{ineq1}, we obtain
		
		\begin{center}
			$\displaystyle \begin{aligned} E_\lambda (u) & \geq \sum_{i=1}^n \frac{m_0 \kappa_i}{p_i^+}  \Big(\|u\|_{p_i(x),w_i}^{p_i^-} + \mathcal{H}(k_{i}^3) \|u\|_{q_i(x),w_i}^{q_i^-} \Big)  - \lambda c_{3} \left[ \sum_{i=1}^{n} \left( \sum_{j=1}^{n} \left|b_{i j}\right|_{\alpha_{i j}(x)} \left(\frac{\mu_{i j}-1}{\mu_{i j}}\left\|u_{j}\right\|_{\gamma_{j}(x)}^{\mu_{i j}}\right. \right. \right.\left.\left.\left. +\frac{1}{\mu_{i j}}\left\|u_{i}\right\|_{\gamma_{i}(x)}^{\mu_{i j}}\right)\right) \right] \\
				 &  \geq \sum_{i=1}^n \left[ \frac{m_0 \kappa_i}{p_i^+}  \Big(\|u\|_{p_i(x),w_i}^{p_i^-} + \mathcal{H}(k_{i}^3) \|u\|_{q_i(x),w_i}^{q_i^-} \Big)  - \lambda c_{4}\left(\sum_{j=1}^{n}\left(\left|b_{i j}\right|_{\alpha_{i j}(x)}\left\|u_{j}\right\|_{\gamma_{j}(x)}^{\mu_{i j}}\right.\right.\right. \left.\left.\left.+\left|b_{i j}\right|_{\alpha_{i j}(x)}\left\|u_{i}\right\|_{\gamma_{i}(x)}^{\mu_{i j}}\right)\right)\right]. \end{aligned} $
		\end{center}
		
		This shows that $(\Phi - \lambda \Psi)(u) \longrightarrow +\infty$ as $\|u\| \longrightarrow +\infty$, since we have $\displaystyle 1 < \mu_{ij} < \inf_{x \in \mathbb{R}^N} \gamma_i(x)$. Therefore, it is confirmed that $\Phi - \lambda \Psi$ is coercive on $X$, for every parameter $\lambda>0$ in particular for every $\displaystyle \lambda \in \Lambda_r = ]\frac{\Phi(\bar x)}{\Psi(\bar x)}, \frac{r}{\displaystyle \sup_{\Phi(\bar x) \leq r} \Psi(\bar x)}[$,
		and then the hypothesis $(a_2)$ of  Lemma \ref{lem1.1}  is also proved. Now, all the hypotheses of Lemma \ref{lem1.1} are satisfied. Note that the solutions of the equation $\Phi^\prime (u) - \lambda \Psi^\prime (u)$ are exactly the weak solutions of \eqref{s1.1}. Thus, for each
		
		\begin{center}
			$\displaystyle \lambda \in \Bigg] \frac{\displaystyle m_1 \sum_{i=1}^n\Big( \max \left\lbrace \| z_i\|_{p_i(x),w_i}^{p_i^-} ,\| z_i\|_{p_i(x),w_i}^{p_i^+} \right\rbrace + \mathcal{H}(k_i^3) \max \left\lbrace \| z_i\|_{q_i(x),w_i}^{q_i^-} ,\| z_i\|_{q_i(x),w_i}^{q_i^+} \right\rbrace\Big)}{\displaystyle \int_{\mathbb{R}^N} F(x, z_1, \dots, z_n) dx} , \frac{r}{\displaystyle \int_{\mathbb{R}^N} \sup_{(\xi_1, \dots, \xi_n) \in K(\frac{sr}{m_0^{\star}})} F(x, \xi_1, \dots, \xi_n) dx} \Bigg[$
		\end{center}
		
		system \eqref{s1.1} admits at least three distinct weak solutions in $X$.
	\end{proof}





\begin{thebibliography}{99}
		
		\bibitem{Afrouzi} \textsc{G. A. Afrouzi, S. Heidarkhani}, \textsc{ Existence of three solutions for a class of Dirichlet quasilinear elliptic systems involving the $(p_1,..., p_n)$-Laplacian}. Nonlinear Anal. {\bf70}, 135-143 (2009).
		
		
		\bibitem{Anto1}\textsc{S.N. Antontsev, J.F Rodrigues},
		{On stationary thermo-rheological viscous flows},
		{\it Ann. Univ. Ferrara Sez. VII Sci. Mat}, {\bf 52} (2006), 19-36.	
		
		\bibitem{Anto2}\textsc{S.N. Antontsev, S.I. Shmarev},
		{Handbook of Differential Equations},
		{\it Stationary Partial Differ. Equ}, {\bf 3} (2006), Chap. 1.
		
		\bibitem{Anto3}\textsc{S.N. Antontsev, S.I. Shmarev},
		{A model porous medium equation with variable exponent of nonlinearity: existence, uniqueness and localization properties of solutions},
		{\it Nonlinear Anal}, {\bf 60} (2005), 515-545.
		
		\bibitem{Aydi1}\textsc{I. Aydin},
		{Weighted Variable Sobolev Spaces and Capacity},
		{\it Journal of Functional Spaces and Applications Volume 2012}, {\bf} Article ID 132690, 17 pages.
		
		\bibitem{BonaCand1}\textsc{G. Bonanno, P. Candito}, 
		{Non-differentiable functionals and applications to elliptic problems with discontinuous nonlinearities}, 
		{\it J. Differential Equations}, {\bf 244} (2008), 3031-3059.
		
		\bibitem{BonaMara}\textsc{G. Bonanno, S.A. Marano},
		{On the structure of the critical set of non-differentiable functions with a weak compactness condition},
		{\it Appl. Anal.}, {\bf 89} (2010), 1-10.
		
		\bibitem{Bona1}\textsc{G. Bonanno, A. Chinni},
		{Discontinuous elliptic problems involving the $p(x)$-Laplacian},
		{\it Math. Nachr}, {\bf 284} (2011), 639-652.
		
		\bibitem{Bona2}\textsc{G. Bonanno, A. Chinni},
		{Multiple solutions for elliptic problems involving the $p(x)$-Laplacian},
		{\it Matematiche (Catania)}, {\bf 66} (2011), 105-113.
		
		
		\bibitem{Chems0}\textsc{N. Chems Eddine, M. A. Ragusa },\emph{ Generalized critical Kirchhoff-type potential systems with Neumann Boundary conditions},{\it Applicable Analysis}, (2020) https://doi.org/10.1080/00036811.2022.2057305.
		
		
		\bibitem{Chems} \textsc{N. Chems Eddine, A.A. Idrissi},
		\emph{ Multiple Solutions to a $(p_1(x),\ldots,p_n(x))$-Laplacian-type Systems in Unbounded Domain},
		{\it Azerbaijan Journal of Mathematics}, {\bf 10} (1), 3-20, 2020.
		
		\bibitem{Chems1} N. Chems Eddine, A. El Hachimi; \emph{ Multiple Solutions for Elliptic (p(x),q(x))-Kirchhoff-Type Potential Systems in Unbounded Domains} International. J. of Differential Equations; New York Vol. 2020,  (2020). DOI:10.1155/2020/3438169.
		
		\bibitem{Chen1}\textsc{Y. Chen, S. Levine, M. Rao},
		{Variable exponent, linear growth functionals in image restoration},
		{\it SIAM J. Appl. Math}, {\bf 66(4)} (2006), 1383-1406.
		
		\bibitem{Cruz1} D. Cruz-Uribe, L. Diening,  P. H\"{a}st\"{o},
		\textit{The maximal operator on weighted variable Lebesgue spaces},
		Frac. Calc. Appl. Anal., \textbf{14}, 2011, 361-374.
		
		
		\bibitem{Dai1}\textsc{G. Dai},
		{Three solutions for a nonlocal Dirichlet boundary value problem involving the $p(x)$-Laplacian},
		{\it Appl. Anal}, {\bf 92} (2013), 191-210.
		
		\bibitem{Dai2} \textsc{G. Dai, X. Li}, 
		{On nonlocal elliptic systems of $p(x)$-Kirchhoff-type under Neumann boundary condition}, 
		{\it J. Math. Resea with app}., Vol. 33 (2013), No. 4, p.p. 443-450
		
		{Existence of solutions for nonlocal elliptic systems with nonstandard growth conditions}, {\it  Elec. J. of Diff. Equ.}, Vol. 2011 (2011), No. 137, p.p. 1-13.
		
		\bibitem{Dien1} \textsc{L. Diening, P. Harjulehto, P. HÃ¤stÃ¶, M. Ru\v{z}icka},
		{Lebesgue and Sobolev Spaces with Variable Exponents}.
		{\it Lecture Notes in Mathematics 2017, Springer-Verlag, Heidelberg}, 2011.
		
		
			\bibitem{Djellit} A. Djellit, Z. Youbi and S. Tas;
		\emph{Existence of solutions for elliptic systems in $\mathbb{R}^N$ involving the $p$-Laplacien}, Electron. J. Differential Equations, Vol 2012(2012), No. 131,pp. 1-10.
		
		\bibitem{Edmu1} \textsc{D. E. Edmunds, J. Rakosnik},
		{Sobolev embeddings with variable exponent},
		{\it Studia Math}, {\bf 143}, (2000) 267-293.
		
		\bibitem{Fan3} \textsc{X. Fan, D. Zhao},
		{On the spaces $L^p(x)(\Omega)$ and $W^{m,p}(\Omega)$},
		{\it J. Math. Anal. Appl}, {\bf 263} (2001), 424-446.
		
		\bibitem{He} \textsc{C. He, G. Li}, \emph{The regularity of weak solutions to nonlinear scalar field elliptic equations containing $p-q$-Laplacians}, Ann. Acad. Sci. Fenn. 33, 337-371 (2008).
		
		\bibitem{Hurtado}E.J.  Hurtado, O.H. Miyagaki,
		R.S. Rodrigues;\emph{Existence and Asymptotic Behaviour for a Kirchhoff Type Equation With Variable Critical Growth Exponent}, Milan J. Math. Vol. 77 (2010) 127-150.
		
		\bibitem{Kir} \textsc{G. Kirchhoff},
		{Mechanik, Teubner, Leipezig, 1983.}
		
		\bibitem{Kov1} \textsc{O. Kov\'{a}\v{c}ik, J. R\'{a}kosn\'{i}k}, {On spaces $L^{p(x)}(\Omega)$ and $W^{1,p}(\Omega)$.},
		{\it Czechoslovak Math. J.} ,  {\bf41}  (1991), 592-618.
		
		\bibitem{Liu1} \textsc{J. Liu, X. Shi},
		{Existence of three solutions for a class of quasilinear elliptic systems involving the $(p(x),q(x))$-Laplacian},
		{\it Nonlinear Anal}, {\bf 71} (2009), 550-557.
		
		\bibitem{Mahshid} \textsc{M. Mahshid, A. Razani}, \emph{A weak solution for a $(p(x),q(x))$-Laplacian elliptic problem with a singular term}, Boundary Value Problems volume 2021, Article number: 80 (2021)
		
		
		\bibitem{Ni} \textsc{W. Ni, J. Serrin}, \emph{Existence and nonexistence theorems for ground states of quasilinear partial differential equations}, {\it Att. Convegni Lincei}, {\bf 77} (1985), 231-257.
		
		\bibitem{Rad} \textsc{V.D. R\u{a}dulescu}, \emph{Nonlinear elliptic equations with variable exponent: old and new},{\it Nonlinear Anal.}
		{\bf121} (2015), 336-369.
		
		
			
		\bibitem{Raj} K. Rajagopal, M. Ru\v{z}i\v{c}ka, \emph{ Mathematical modelling of electrorheological fluids},{\it Contin. Mech. Thermodyn}. 13 (2001) 5978.
		
		\bibitem{Ru1} \textsc{M. Ru\v{z}i\u{c}ka},
		{Flow of shear dependent electro-rheological fluids},
		{\it C. R. Acad. Sci. Paris Ser}, {\bf I 329} (1999), 393-398.
		
		\bibitem{Ru2} \textsc{M. Ru\v{z}i\u{c}ka},
		{Electro-rheological fluids: modeling and mathematical theory},
		{\it Lecture notes in mathematics}, {\bf Springer, Berlin} (2000).
		
	    \bibitem{Shi1}	\textsc{ X. Shi, V. D. R\u{a}dulescu, D. D. Repov\v{s} }
	    {Multiple solutions of double phase variational problems with variable exponent},{\it Advances in the Calculus of Variations} https://doi.org/10.1515/acv-2018-0003.
		
		
		\bibitem{Zeidler}\textsc{E. Zeidler},
		{Nonlinear Functional Analysis and its Applications}, 
		{vol. II/B, Berlin-Heidelberg-New York}, 1990.
		
		\bibitem{Zhik1} \textsc{V. Zhikov},
		{Averaging of functionals in the calculus of variations and elasticity},
		{\it Math. USSR Izv}, {\bf 29} (1987), 33-66.
		
	\end{thebibliography}
\end{document}